\documentclass[12pt]{amsart}
\usepackage{todonotes}
\usepackage[margin=1.05in]{geometry}
\usepackage{amsmath,amssymb,amsthm,mathtools}
\usepackage{mathdots}
\usepackage[T1]{fontenc}
\usepackage{lmodern}
\usepackage{microtype}
\usepackage{thmtools}
\usepackage[colorlinks=true,linkcolor=blue,citecolor=blue,urlcolor=blue]{hyperref}
\usepackage[nameinlink]{cleveref}
\usepackage{xcolor}

\newtheorem{theorem}{Theorem}[section]
\newtheorem{proposition}[theorem]{Proposition}
\newtheorem{lemma}[theorem]{Lemma}
\newtheorem{corollary}[theorem]{Corollary}
\newtheorem{algorithm}[theorem]{Algorithm}

\newtheorem{hypothesis}{Hypothesis}

\theoremstyle{definition}
\newtheorem*{definition}{Definition}
\newtheorem*{example}{Example}
\newtheorem*{question}{Question}
\newtheorem*{answer}{Answer}
\theoremstyle{remark}
\newtheorem*{remark}{Remark}

\newcommand{\F}{\mathbb F}
\newcommand{\Z}{\mathbb Z}

\DeclareMathOperator{\wt}{wt}

\newcommand{\ts}{\textsuperscript}

\newcommand{\Col}{\mathrm{Col}}
\newcommand{\Diag}{\mathrm{Diag}}
\newcommand{\Tdk}{\mathcal{T}_{d,k-1}}

\DeclareMathOperator{\Slots}{Slots}

\title{Thakur's Hypotheses on Power Sums over $\F_q[t]$}
\author{Evan Chen, and Ken Ono}
\address{Axiom Math, 124 University Avenue, Palo Alto, CA 94301}
\email{evan@axiommath.ai}
\email{ken@axiommath.ai}
\date{June 15, 2026}

\usepackage{tikz}
\usetikzlibrary{shapes.geometric, arrows, arrows.meta}

\tikzstyle{proved} = [rectangle, rounded corners,
text width=4.5cm,
minimum height=1cm,
text centered,
draw=black,
fill=green!30]

\tikzstyle{assumed} = [rectangle, rounded corners,
text width=4.5cm,
minimum height=1cm, text centered,
draw=black,
fill=cyan!20]

\tikzstyle{result} = [rectangle, rounded corners,
text width=3.5cm,
minimum height=0.8cm,
text centered,
draw=black,
fill=cyan!20]

\tikzstyle{arrow} = [thick,->,>=stealth, -{Latex[length=3mm]}]

\subjclass[2020]{Primary 11M38; Secondary 11T55, 11G09}
\keywords{Function fields, Carlitz--Goss zeta functions, finite field power sums, multizeta values}

\begin{document}

\begin{abstract}
In his 2009 paper \cite{Thakur2009}, Thakur posed three
conjectural hypotheses for the degrees of the power sums
\[
S_d(k)=\sum_{\substack{a\in \mathbb F_q[t] \text{ monic}\\ \deg a=d}} a^{-k}
\qquad \text{and}\qquad
s_d(k)=-\deg_t S_d(k).
\]
For prime fields $q=p$, we prove Hypotheses H1 and H2,
giving a unique greedy description of the extremal term in Carlitz's formula and establishing the recursion
\[
s_d(k)=s_{d-1}(s_1(k))+s_1(k).
\]
As consequences, the prime-field recursion gives the strict Newton-polygon
convexity used in the prime-field Carlitz--Goss Riemann-hypothesis theorem,
and it recovers Thakur's nonvanishing theorem for positive multizeta values over $\mathbb F_p[t]$.
We also prove Hypothesis H3 for all finite fields $q=p^f$, establishing the monotonicity
\[
s_d(k)<s_d(k+1)\qquad (p\nmid k).
\]
We provide Lean formalizations of the arguments in this paper,
generated by AxiomProver.
\end{abstract}

\maketitle

\section{Introduction and Statement of Results}
In this paper, $q \coloneq p^f$ is a power of a prime,
and we work over the finite field $\F_q$.
In \Cref{sec:H1,sec:H2} we specialize to $f = 1$;
the hypothesis $q=p$ means that base-$p$ and base-$q$ expansions coincide.

\subsection{The Carlitz--Goss zeta function and its degree-$d$ parts}
We define the Carlitz--Goss zeta function in terms of its degree-$d$ pieces as follows.
Let $A \coloneq \F_q[t]$ and for $d\geq 0$, write $A_d^+$ for the monic polynomials in $A$ of degree $d$.
For an integer $k$, put
\begin{equation}\label{eq:intro-Sd}
  S_d(k) \coloneq \sum_{a\in A_d^+}\frac{1}{a^k}\in \F_q(t)
  \qquad {\text {\rm and}}\qquad
  s_d(k) \coloneq -\deg_t S_d(k) \ge 0.
\end{equation}
We use the convention $\deg_t(0)=-\infty$, so that $s_d(k)=+\infty$ if $S_d(k)=0$.
Also $A_0^+=\{1\}$, and so we have $S_0(k)=1$ and $s_0(k)=0$.
Then the sums \eqref{eq:intro-Sd} are the degree-$d$ pieces of the Carlitz--Goss zeta values,
\begin{equation}
  \zeta(k) \coloneq \sum_{d\geq 0}S_d(k),
\end{equation}
and of the function-field multizeta values
\[
  \zeta(s_1,\ldots,s_r) \coloneq \sum_{d_1>\cdots>d_r\geq 0}
  S_{d_1}(s_1)\cdots S_{d_r}(s_r).
\]
For general background on function-field arithmetic and Carlitz--Goss zeta functions,
see \cite{Goss1996} and  \cite{Thakur2004}.
Although each summand $a^{-k}$ has transparent degree,
the degree of $S_d(k)$ is subtle because of cancellation in characteristic $p$.
Thakur \cite{Thakur2009} studies this cancellation problem through the integers $s_d(k)$,
the orders of the power sums at the infinite place.

One motivation for the degree problem is its relation to the zero distribution
of our zeta functions.
The Carlitz--Goss zeta function is naturally a two-variable function;
after fixing the $p$-adic weight parameter,
its zeros in the remaining variable a priori lie in the completed algebraic
closure $C_\infty$ of $\F_q((1/t))$.
Following the standard terminology in this subject,
the Riemann-hypothesis analogue for $\F_q[t]$ asserts that these zeros are
simple and lie on the ``real line'' $K_\infty=\F_q((1/t))$.  This theorem is known.
Wan \cite{Wan1996} proved it over $\F_p[t]$,
and Diaz--Vargas \cite{DiazVargas1996} gave another proof in the prime-field case,
and Sheats \cite{Sheats1998} proved the general finite-field case.

\subsection{On Hypothesis H1, H2, and H3}
The point of the present paper is to study three hypotheses, H1, H2 and H3,
posed by Thakur in \cite[Appendix A.12,
with H1 and H2 formulated in Sections 2.3.1 and 3.2]{Thakur2009} regarding these power sums.
We prove the prime-field forms of H1 and H2, and we prove H3 over arbitrary finite fields.
We then also explain why several important
consequences follow directly from the leading-term structure of the power sums.

For example, H2 gives a short recursion for the numbers $s_d(k)$.
This recursion immediately yields the strict Newton-polygon convexity used by
Thakur to deduce the prime-field Carlitz--Goss Riemann-hypothesis analogue,
and it also gives nonvanishing of all positive function-field multizeta values.
Therefore, the first nonzero term of $S_d(k)$ carries both computational and arithmetic information.

The proofs are organized around a common combinatorial mechanism.
In all three proofs, the power sums $S_d(k)$ are parametrized
via carry-free digit decompositions.
The central problem is therefore to
identify which decomposition gives the extremal degree, and
show that this decomposition is unique.
When $q = p$, this uniqueness is governed by a so-called
\emph{greedy} assignment of the individual base-$p$ place-value terms,
meaning that the relevant place-value terms are ordered
and then placed successively in the smallest permitted slot allowed by constraints.
A second, equivalent description in terms of reciprocal digit slots gives the recursion predicted by H2.
For H3, the argument is different and relies on Sheats' uniqueness theorem.

\subsection{Hypothesis H1 and its consequences}
Carlitz's generating function \cite{Carlitz1935}, in the form used in \cite[Section 2.3]{Thakur2009},
gives an expansion for $S_d(k+1)$ defined as follows.
Writing
\[ \ell_n \coloneq \prod_{i=1}^n(t-t^{q^i}), \]
we have
\begin{equation}
  S_d(k+1) = \ell_d^{-1} \sum_{\substack{k=k_0+k_1q+\cdots+k_dq^d \\ k_i \ge 0}}
  \binom{k_0 + k_1 + \dots + k_d}{k_0, \dots, k_d}
  \prod_{i=0}^d \left( d_i \ell_{d-i}^{q^i} \right)^{-k_i}.
  \label{eq:carlitz-full}
\end{equation}
For the purposes of Hypothesis H1, we only need to consider the maximum-degree
terms of \eqref{eq:carlitz-full}.
To that end, for $0\leq i\leq d$, define
\begin{equation}\label{eq:intro-bid}
  b(i,d) \coloneq -\deg(d_i \ell_{d-i}^{q^i})
  = -\left(\frac{q^{d+1}-q^{i+1}}{q-1}+iq^i\right) \le 0.
\end{equation}
Then the point is that \eqref{eq:carlitz-full} is a finite sum indexed by decompositions
\begin{equation}\label{eq:intro-carlitz-decomp}
  k=k_0+k_1q+\cdots+k_dq^d,
  \qquad k_i\in\Z_{\geq 0}
\end{equation}
where the degree of the summand indexed by $(k_0,\ldots,k_d)$ is
\begin{equation}\label{eq:intro-degree-contribution}
  b(0,d)+\sum_{i=0}^d k_i b(i,d).
\end{equation}
The term $b(0,d)$ is independent of the decomposition.
Meanwhile, we also know which coefficients of \eqref{eq:carlitz-full} are nonzero;
by Lucas' theorem (\Cref{lem:Lucas-binomial} below), the multinomial coefficient
$\binom{k_0 + k_1 + \dots + k_d}{k_0, \dots, k_d}$
is nonzero modulo $p$ exactly when the addition
\[ k_0+k_1+\cdots+k_d \]
has no carry in base-$p$.
The possible cancellation at the leading degree is
thus controlled by the following uniqueness question.
\begin{hypothesis}
\label{H1}
In Carlitz's expansion for $S_d(k+1)$,
among the nonzero summands indexed by decompositions \eqref{eq:intro-carlitz-decomp},
there is a unique summand of maximum $t$-degree.
\end{hypothesis}

Our first theorem proves H1 in the prime-field case.
Its role is to guarantee that the top-degree term in
Carlitz's formula \eqref{eq:carlitz-full} is not merely detectable, but unique.

\begin{theorem}[H1 for prime fields]\label{thm:intro-H1}
Let $q=p$ be prime, $d \ge 0$, and $k \ge 0$.
Then \Cref{H1} holds: among all decompositions $k = \sum k_i q^i$
for which the addition $\sum k_i$ has no carries in base-$p$,
there is a unique choice of decomposition maximizing
\[ b(0,d)+\sum_{i=0}^d k_i b(i,d). \]
\end{theorem}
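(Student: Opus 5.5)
The plan is to turn the maximization into an assignment problem on ``place-value units'' and then show that a greedy assignment is the unique optimum, using strict exchange inequalities. Write each $k_i$ in base $p$ as $k_i=\sum_j a_{i,j}p^j$ with $0\le a_{i,j}\le p-1$. By \Cref{lem:Lucas-binomial}, the multinomial coefficient is nonzero modulo $p$ exactly when $\sum_i a_{i,j}\le p-1$ for every digit position $j$. So a nonzero summand is a placement of $\sum_{i,j}a_{i,j}$ units into slots $(i,j)$ with $0\le i\le d$, $j\ge 0$. Each column $j$ holds at most $p-1$ units, a unit in slot $(i,j)$ has value $p^{i+j}$, and the values must sum to $k$. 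Its cost, the negative of its contribution to $\sum_i k_ib(i,d)$, is $p^{j}\lvert b(i,d)\rvert=p^{i+j}c_i$, where
\[
c_i\coloneq \frac{\lvert b(i,d)\rvert}{p^i}=\frac{p^{d+1-i}-p}{p-1}+i .
\]
A direct computation gives $c_i-c_{i+1}=p^{d-i}-1\ge p-1>0$ for $i<d$. Hence the per-unit-of-value cost strictly decreases in $i$, and maximizing the degree means pushing as much of $k$ as possible into slots with large $i$. Since $b(0,d)$ is common to all summands, we may ignore it.

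Next I would define the greedy decomposition explicitly. Order the place values $p^m$ appearing in $k$, starting from the base-$p$ digits of $k$ and allowing a unit $p^m$ to be split into $p$ units $p^{m-1}$ when forced. Process them from the largest value down, placing each unit in the slot $(i,j)$ with $i+j=m$ of largest admissible $i$, equivalently smallest column $j$, subject to the column capacities $p-1$. When no slot at level $m$ is free, the unit is split. I would check that this procedure always terminates with a valid carry-free decomposition, using the column $i=0$ as a fallback. Then $k_i=\sum_j a_{i,j}p^j$ recovers the greedy candidate.

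The core is an exchange lemma. Given any valid configuration different from the greedy one, I would exhibit a local move that strictly decreases the cost while preserving the total value $k$ and the column capacities. There are two kinds of move. The first moves a unit from slot $(i,j)$ to a free slot $(i',j')$ with $i'>i$ and $i'+j'=i+j$; this saves $p^{i+j}(c_i-c_{i'})>0$. The second merges $p$ units of value $p^{m-1}$, sitting in low-$i$ slots, into one unit of value $p^m$ placed at a higher $i$, or performs the reverse split when capacity forces it. For the second kind, the key estimate is that the gap $c_i-c_{i+1}=p^{d-i}-1$ is large enough to dominate the cost change caused by rearranging units among columns. Here the explicit formula for $c_i$ enters quantitatively, beyond its monotonicity. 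Because every move strictly improves the cost, and the greedy configuration admits no improving move, the greedy configuration is the unique maximizer, which gives \Cref{H1}.

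I expect the main obstacle to be this exchange step in the presence of splitting. The set of units is not fixed by $k$, since merges and splits change the number of units and their column usage. A naive move can free one column while overfilling another. My first approach would be a lexicographic potential: compare configurations by the vector of values placed in slot-index $i=d,d-1,\ldots,0$. I would show that the greedy configuration is the lexicographic maximum, and that lexicographic dominance implies strictly smaller cost. The second claim would follow from $c_i-c_{i+1}\ge p-1$ together with a bound on how much value can sit at indices below $i$ once the higher slots are fixed. If that bound turns out to be too weak near $i=d$, where the gap is only $p-1$, I would treat the top two slot-indices by a separate direct case analysis.
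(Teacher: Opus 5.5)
Your reduction to a slot-assignment problem is correct, and your costs agree with the paper's weights ($g_i=(p-1)c_i+p$). However, both your greedy candidate and your lexicographic potential are wrong, so the exchange lemma you intend to prove is false.

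Processing place values from the largest down lets high levels occupy low digit columns that low levels cannot avoid: level $m<d$ only has slots in columns $0,\dots,m$, and level $0$ only has $(0,0)$. Take $p=2$, $d=2$, $k=13=1101_2$.
\begin{itemize}
\item Your rule sends $2^3\to(2,1)$ and $2^2\to(2,0)$. Then $2^0$ can only go to $(0,0)$, in the already full column $0$, and cannot be split.
\item The admissible decompositions are $(k_0,k_1,k_2)=(13,0,0),(9,2,0),(1,6,0),(5,0,2)$, with costs $\sum_i k_i\lvert b(i,2)\rvert=78,66,42,46$.
\item The unique optimum is $(1,6,0)$ (indeed $s_2(14)=6+42=48$). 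The lexicographic maximum in slot-index $d$ is $(5,0,2)$, so lexicographic dominance does not imply lower cost.
\item $(5,0,2)$ admits none of your moves: no unit has a free higher-$i$ slot at its level, no level holds $p$ units, and no split is feasible. It is a non-optimal configuration with no improving move of your two kinds.
\end{itemize}

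Two ingredients from the paper are missing.

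\textbf{No-carry lemma} (\Cref{lem:H1-carry-elimination}). In a minimizer, level $m$ holds exactly $C_m$ units (the base-$p$ digits of $k$), so splits never occur and the set of units is fixed. This needs a precisely placed merge. At a level $m$ holding at least $p$ units, remove $p$ of them. Let $r<d$ be the largest row among the removed units outside row $d$; it exists because row $d$ holds at most $p-1$. Add one unit at $(r+1,m-r)$, in the same column as the removed unit at $(r,m-r)$. The saving is positive because $c_r+(p-1)c_d-pc_{r+1}=(p-1)(d-r)>0$.

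\textbf{Two-unit exchange across levels.} Suppose $r<s$, $t>0$, and $(r,m-r)$ and $(s+t,m-s)$ are occupied. Move these units to $(s,m-s)$ and $(r+t,m-r)$. All level and column counts are preserved. Since
\[
p^{i+j}c_i=\frac{p^{j+d+1}-p^{i+j+1}}{p-1}+ip^{i+j}
\]
is a function of the column $j$, plus a function of the level $i+j$, plus $ip^{i+j}$, the cost changes by $-(s-r)(p^{m+t}-p^m)<0$.

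So in a contested column the lower level must receive the higher slot. This is why the greedy has to run from level $0$ upward. It never gets stuck, since $(0,m)$ lies in column $m$, which lower levels do not touch. Compare a minimizer with this greedy at the lowest level where they differ. Your single-unit move applies when the target column has room, and the exchange applies otherwise. This gives uniqueness.
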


The uniqueness statement immediately turns the formal expansion into an effective degree formula.
We record this consequence separately because it is the computational form of H1 used later.

\begin{corollary}[Degree computation from Carlitz's formula]\label{cor:intro-H1}
Assume $q=p$.  For each $d\geq 0$ and $k\geq 0$,
let $(k_0^*,\ldots,k_d^*)$ be the unique maximizing decomposition in \Cref{thm:intro-H1}.  Then
\[
  s_d(k+1)=-\left(b(0,d)+\sum_{i=0}^d k_i^* b(i,d)\right).
\]
In particular, the leading term in Carlitz's formula for $S_d(k+1)$ cannot cancel.
\end{corollary}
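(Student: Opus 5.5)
The corollary follows from \Cref{thm:intro-H1} together with the shape of Carlitz's formula \eqref{eq:carlitz-full}. The plan is to show that the summand indexed by $(k_0^*,\ldots,k_d^*)$ has strictly larger $t$-degree than every other nonzero summand. Then $\deg_t S_d(k+1)$ equals the degree of that summand, and negating gives $s_d(k+1)$.

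\emph{Step 1: describe the nonzero summands.} Work in $\F_p(t)$, where $\ell_d$ and each $d_i\ell_{d-i}^{q^i}$ are nonzero polynomials. The summand indexed by $(k_0,\ldots,k_d)$ is the multinomial coefficient, reduced mod $p$, times $\ell_d^{-1}\prod_i (d_i\ell_{d-i}^{q^i})^{-k_i}$. This rational function is nonzero and has $t$-degree
\[
-\deg \ell_d+\sum_i k_i b(i,d).
\]
Since $\ell_d=d_0\ell_d^{q^0}$, we have $-\deg\ell_d=b(0,d)$, which recovers \eqref{eq:intro-degree-contribution}. By \Cref{lem:Lucas-binomial}, the coefficient is a nonzero element of $\F_p$ exactly when the addition $k_0+\cdots+k_d$ is carry-free in base $p$. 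So the nonzero summands are indexed precisely by the carry-free decompositions, and each has degree $b(0,d)+\sum_i k_i b(i,d)$.

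\emph{Step 2: isolate the top summand.} The set of carry-free decompositions is finite and nonempty, since $k_0=k$, $k_i=0$ for $i>0$ always works. By \Cref{thm:intro-H1}, $(k_0^*,\ldots,k_d^*)$ is the unique maximizer of the degree. Every other nonzero summand therefore has degree at most $D^*-1$, where $D^*=b(0,d)+\sum_i k_i^* b(i,d)$.

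\emph{Step 3: conclude.} Degrees of rational functions satisfy $\deg(f+g)\le\max(\deg f,\deg g)$, with equality when the two degrees differ. Hence the sum of all the other summands has degree at most $D^*-1$, and $\deg_t S_d(k+1)=D^*$. In particular $S_d(k+1)\neq 0$, so $s_d(k+1)=-D^*$, which is the formula claimed. The same computation shows the leading term cannot cancel.

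The only step needing care is Step 1: the degree formula must use $-\deg\ell_d=b(0,d)$, and the nonvanishing criterion must be exactly the one used in \Cref{thm:intro-H1}. Once these match, the uniqueness supplied by the theorem does all the work.
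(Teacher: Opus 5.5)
Your proposal is correct and is essentially the paper's argument. Lucas' theorem (the multinomial form, \Cref{lem:mult-Lucas}) identifies the nonzero summands of \eqref{eq:carlitz-full} with the admissible decompositions, \Cref{thm:intro-H1} makes the top-degree one unique, and the ultrametric property of $\deg_t$ then gives $\deg_t S_d(k+1)=b(0,d)+\sum_i k_i^* b(i,d)$. Your extra checks are exactly the small details the paper leaves implicit: nonemptiness via $k_0=k$, and $-\deg\ell_d=b(0,d)$ using $d_0=1$.
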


This is why H1 was raised in \cite[Section 2.3.1]{Thakur2009}.
It converts the problem of finding the degree of a characteristic-$p$ rational function into a finite,
explicit, carry-free optimization problem.

\begin{remark}[The non-prime-field case]\label{rem:intro-nonprime}
The proof of \Cref{thm:intro-H1} uses the equality of two bases: Lucas' theorem sees base-$p$,
while Carlitz's decomposition is in powers of $q$.  When $q=p$, these are the same.
When $q=p^f$ with $f>1$, they are not.
There are several extension-field pathologies, especially for $q=4$, in \cite[Appendix A.11]{Thakur2009}.
These examples do not by themselves disprove H1,
but they show that the proof in this paper is genuinely prime-field in nature.
We make no claim here about H1 for non-prime finite fields.
\end{remark}

\subsection{Hypothesis H2 and its consequences}
Assume $k>0$ and $d \ge 1$.
There is another way besides \eqref{eq:carlitz-full} to rewrite $S_d(k)$,
which is more recursive in nature.
From \cite[Section 3.2]{Thakur2009}, we have the calculation
\begin{equation}
  \label{eq:prelim-H2-expansion}
  S_d(k)
  =-\sum_{\substack{i\geq 1\\(q-1)\mid i}}
  \binom{-k}{i}\frac{S_{d-1}(k+i)}{t^{k+i}}.
\end{equation}
We shift the summation variable of
\eqref{eq:prelim-H2-expansion} so that its first term is nonzero.
In fact, \cite[Section 3.1]{Thakur2009}
shows that the first nonzero term occurs at $i = s_1(k) - k$.
Hence, we let $j = i - (s_1(k) - k)$
and change summation to $j \ge 0$ with $(q-1) \mid j$.
Further rewriting the binomial coefficient as
\[ \binom{-k}{i} = (-1)^i \binom{k+i-1}{k-1}
  = (-1)^{s_1(k)-k+j} \binom{s_1(k) + j - 1}{k-1} \]
we obtain
\begin{equation}
  \label{eq:intro-H2-expansion}
  S_d(k)
  = (-1)^{s_1(k)-k+1} \sum_{\substack{j\geq 0\\(q-1)\mid j}}
  (-1)^j \binom{s_1(k)+j-1}{k-1} \frac{S_{d-1}(s_1(k)+j)}{t^{s_1(k)+j}}
\end{equation}
together with a promise that the first summand (at $j=0$) has nonzero binomial coefficient.
In \eqref{eq:intro-H2-expansion}, the degree of a nonzero summand is
$-(s_{d-1}(s_1(k)+j)+s_1(k)+j)$ for each $j$.

In the prime-field case, \Cref{H2} further asserts that
the $j=0$ term also controls the leading term of \eqref{eq:intro-H2-expansion}, i.e.:
\begin{hypothesis}
\label{H2}
Assume $q=p$ is prime and $k>0$.  Among all $j \geq 0$ satisfying
\[
  (p-1)\mid j\qquad \text{and} \qquad \binom{s_1(k)+j-1}{k-1}\not\equiv 0\pmod p,
\]
the quantity
\[
  s_{d-1}(s_1(k)+j)+s_1(k)+j
\]
is minimized uniquely at $j=0$.
\end{hypothesis}

\begin{remark}
  The restriction to prime fields is essential in \Cref{H2}.
  Thakur gives positive-characteristic extension-field counterexamples to this minimization principle;
  for instance, when $q=4$ and $k=75$,
  the minimum in the H2 problem occurs at $j=12$ rather than at $j=0$ \cite[Appendix A.11]{Thakur2009}.
\end{remark}

The next theorem verifies this minimization principle over prime fields.

\begin{theorem}[H2 for prime fields]\label{thm:intro-H2}
  Let $q=p$ be prime, $k>0$, and $d\geq 1$.  Then \Cref{H2} holds.
\end{theorem}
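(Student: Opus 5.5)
The plan is to reduce \Cref{H2} to the explicit degree formula of \Cref{cor:intro-H1}, applied with $d-1$ in place of $d$. Write $m=s_1(k)+j$. By \Cref{cor:intro-H1}, $s_{d-1}(m)$ equals minus the optimal value of a carry-free optimization over decompositions of $m-1=\sum_{i=0}^{d-1}k_ip^i$. The quantity to be minimized in \Cref{H2} therefore becomes
\[
F(m)\coloneq m+s_{d-1}(m)=m-b(0,d-1)-\max\sum_{i} k_i\,b(i,d-1).
\]
Before anything else I would record the structure of $s_1(k)$. In the prime-field case, applying \Cref{cor:intro-H1} with $d=1$ gives $s_1(k)$ as a greedy carry-free quantity: it is the smallest $m\ge k$ with $m\equiv k\pmod{p-1}$ such that $\binom{m-1}{k-1}\not\equiv0\pmod p$. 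By Lucas, the last condition means the base-$p$ digits of $k-1$ are dominated by those of $m-1$. So the admissible $m$ in \Cref{H2} are exactly the $m\equiv k\pmod{p-1}$ with $m\ge s_1(k)$ whose digits dominate those of $k-1$, and $m_0\coloneq s_1(k)$ is the digitwise-minimal such element.

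The core step is a monotonicity statement. If $m-1$ digitwise dominates $m_0-1$ (or more generally dominates $k-1$ with $m>m_0$), then $F(m)>F(m_0)$. To prove it I would take the unique greedy optimal decomposition $(k_i^*)$ of $m-1$ coming from \Cref{thm:intro-H1}. From it I would build a carry-free decomposition of $m_0-1$ by removing place-value terms, either deleting units from the slots that hold the extra digits or transferring them to cheaper slots. Since every $b(i,d-1)\le0$, removing terms can only raise $\sum k_ib(i,d-1)$. The raise per unit removed from slot $i$ is $|b(i,d-1)|$, and this has to be compared with the drop in $m$. The point is that the saving in $s_{d-1}$ can never exceed the growth in $m$. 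The comparison I would aim for is
\[
s_{d-1}(m)-s_{d-1}(m_0)\ \ge\ -(m-m_0)+1 .
\]
This should follow from the explicit slope bound $|b(i,d-1)|\ge p^i$ together with the fact that moving a unit of $p^i$ between slots changes the weight in a controlled, convex way: $b(i,d-1)-b(i+1,d-1)$ is increasing in $i$. Strictness would come from the uniqueness part of \Cref{thm:intro-H1}.

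The hard part is the general admissible $m$, which need not digitwise dominate $m_0-1$. Domination is only guaranteed relative to $k-1$, and the congruence mod $p-1$ allows digit redistribution. Here I would first try an induction on $d$, using the reciprocal-slot description of the greedy optimizer: view $s_{d-1}(m)$ via $s_{d-1}(m)=s_{d-2}(s_1(m))+s_1(m)$, which is H2 at depth $d-1$ and hence available inductively. This reduces $F(m)$ to $m+s_1(m)+s_{d-2}(s_1(m))$. I would then show two things. First, $m\mapsto s_1(m)$ is monotone on admissible $m$, strictly beyond $m_0$; this is a digit computation, since $s_1(m)$ fills the digits of $m-1$ greedily up to the residue class. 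Second, $s_{d-2}$ composed with it does not decrease faster than the identity. The second claim is where I expect the real obstacle. The exact greedy form of the optimizer from \Cref{thm:intro-H1} will be needed to compare digit patterns, and it may require a separate lemma: for $m<m'$ in the same residue class with $m'-1$ carry-free over $k-1$, we have $s_{d}(m')+m'>s_d(m)+m$.
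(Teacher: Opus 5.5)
\textbf{There is a genuine gap: the general (non-dominated) admissible $m$, which is exactly the content of H2, is not proved, and the route you sketch for it fails.}

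Your reduction to \Cref{cor:intro-H1} and the digitwise-dominated case are fine. That case in fact needs no slope estimate. Delete stones from the carry-free optimal configuration for $m-1$ given by \Cref{lem:H1-carry-elimination}. This gives an admissible decomposition of $m_0-1$ with no larger value of $-\sum_i k_i b(i,d-1)$, since every $b(i,d-1)\le 0$. Hence $s_{d-1}(m_0)\le s_{d-1}(m)$, and $F(m_0)<F(m)$ because $m_0<m$. (Minor slip: $s_1(k)$ is the least such $m$ with $m>k$, not $m\ge k$.)

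The induction you sketch for the general case rests on claims that are false. Take $p=3$ and $k=1$. Then $m_0=s_1(1)=3$, and every odd $m\ge 3$ is admissible. One computes $s_1(3)=s_1(5)=s_1(7)=9$, $s_1(9)=27$ and $s_1(11)=15$. So $m\mapsto s_1(m)$ is neither strictly increasing beyond $m_0$ nor even weakly monotone on admissible $m$. The same data break your proposed separate lemma already at $d=1$. Indeed $9<11$ lie in the same class mod $2$, and $10$ is carry-free over $k-1=0$, yet $s_1(9)+9=36>26=s_1(11)+11$. So the numerical size of $m$ is the wrong thing to compare or induct on; H2 is not a monotonicity statement in $m$.

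What is missing is a closed formula for $s_d$ that turns the comparison into a statement about digit slots. The paper gets it from \Cref{lem:H3-minformula} plus block rearrangement: $s_d(k)=dk+\Phi_d(M)$ with $M=\Slots_p(k-1)$ (\Cref{thm:H2-slot-formula}). In particular $s_1(k)=k+\beta_1(k-1)$.

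\begin{itemize}
\item Write an admissible exponent as $k+u$ with $u=\beta_1(k-1)+j$.
\item Lucas' theorem identifies $u$ with a submultiset $U\subseteq M$ whose size is a positive multiple of $p-1$, and $\Slots_p(k+u-1)=M\setminus U$.
\item Hence the H2 quantity equals $dk+d\sum_{c\in U}c+\Phi_{d-1}(M\setminus U)$.
\item The extraction inequality (\Cref{lem:H2-extraction}) bounds this below by $dk+\Phi_d(M)$, with equality only for $U=B_1(M)$, i.e.\ $j=0$.
\end{itemize}

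This handles every admissible $j$ uniformly, with no domination hypothesis and no induction on $d$. You could try to extract such a formula from the greedy optimizer of \Cref{thm:intro-H1}, but your proposal does not.
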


The promised recursion is the most useful way to read H2.
It reduces the degree of a degree-$d$ power sum to the corresponding
degree-$(d-1)$ problem at the shifted exponent $s_1(k)$.

\begin{corollary}[Recursion]\label{cor:intro-H2}
Let $q=p$ be prime.  For all $k>0$ and $d\geq 1$, we have
\begin{equation}\label{eq:intro-recursion}
  s_d(k)=s_{d-1}(s_1(k))+s_1(k).
\end{equation}
\end{corollary}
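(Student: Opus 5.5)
The recursion will be read off from the expansion \eqref{eq:intro-H2-expansion} once \Cref{thm:intro-H2} is available. Fix $q=p$, $k>0$, $d\ge 1$. Each summand of \eqref{eq:intro-H2-expansion} indexed by $j$ has the form $\pm c_j\, S_{d-1}(s_1(k)+j)\, t^{-(s_1(k)+j)}$ with $c_j=\binom{s_1(k)+j-1}{k-1}\in\F_p$. Summands with $c_j\equiv 0\pmod p$ vanish. For the others, $\deg_t$ of the summand is $-(s_{d-1}(s_1(k)+j)+s_1(k)+j)$; if $S_{d-1}(s_1(k)+j)=0$ this is $-\infty$ and the term also vanishes. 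The plan is to show that among the surviving terms, exactly one has maximal degree, namely $j=0$. A strict ultrametric inequality for $\deg_t$ on $\F_p(t)$ then gives $\deg_t S_d(k)=-(s_{d-1}(s_1(k))+s_1(k))$, which is \eqref{eq:intro-recursion}.

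The key steps, in order, are as follows.

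First, I recall from the derivation preceding \eqref{eq:intro-H2-expansion} (i.e.\ from \cite[Section 3.1]{Thakur2009}) that $c_0\not\equiv 0\pmod p$.

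Second, I need $S_{d-1}(s_1(k))\neq 0$, so that $s_{d-1}(s_1(k))<\infty$. I would obtain this from \Cref{cor:intro-H1}: it gives a finite formula for $s_{d-1}(m)$ for every $m\ge 1$, applied with $m-1$ in place of $k$, and it shows the leading Carlitz term does not cancel. So $S_{d-1}(m)\neq 0$ for all $m\ge1$, and $s_1(k)\ge k\ge 1$. The case $d-1=0$ is trivial since $S_0=1$.

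Third, I apply \Cref{thm:intro-H2}. Among the $j\ge0$ with $(p-1)\mid j$ and $c_j\not\equiv0$, the quantity $s_{d-1}(s_1(k)+j)+s_1(k)+j$ is uniquely minimized at $j=0$. Therefore every other surviving summand has strictly smaller $t$-degree than the $j=0$ summand.

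Fourth, although the sum over $j$ is formally infinite, it converges in $\F_p((1/t))$, with degrees tending to $-\infty$ since $s_{d-1}\ge0$. Equivalently, only finitely many $j$ have $c_j\neq 0$ as a polynomial identity. Either way, the non-archimedean property applies: the degree of the sum equals the degree of its unique top term.

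I expect no real obstacle here, since the content lies in \Cref{thm:intro-H2}. The only delicate point is the second step: ensuring that $S_{d-1}(s_1(k))\ne0$, so that the minimum in H2 is a finite value attained by an actually nonzero summand. I would handle this via \Cref{cor:intro-H1} as described, or alternatively by induction on $d$ using the recursion itself.
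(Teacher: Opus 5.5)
Your proposal is correct and follows the paper's proof. You substitute \Cref{thm:intro-H2} into \eqref{eq:intro-H2-expansion}, note that the $j=0$ summand is the unique one of maximal $t$-degree, and conclude that it cannot cancel; your explicit checks that $c_0\neq 0$ and $S_{d-1}(s_1(k))\neq 0$ make precise what the paper leaves implicit (the latter also follows from \Cref{thm:H2-slot-formula}, which the proof of H2 already uses). One small correction: your aside that only finitely many $c_j$ are nonzero is false (for $k=1$ every $c_j$ equals $1$), but you do not need it, since convergence in $\F_p((1/t))$ with degrees tending to $-\infty$ already justifies the ultrametric step.
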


The importance of \Cref{cor:intro-H2} is that it packages the
cancellation problem into a one-step recursion.
Iterating \eqref{eq:intro-recursion} expresses $s_d(k)$ in terms of successive values of $s_1$,
which is the only initial input.
The proof passes through the reciprocal slot formula \Cref{thm:H2-slot-formula},
which gives a direct degree formula in the prime-field case and may be useful independently.
Two standard consequences are recalled next so that the arithmetic content of the recursion is explicit.

\begin{corollary}[Strict convexity of the degree sequence]\label{cor:intro-convexity}
Let $q=p$ be prime and $k>0$.  Then
\[
  s_d(k)<s_{d+1}(k)\qquad(d\geq 0),
\]
and the degree jumps are strictly increasing:
\[
  s_d(k)-s_{d-1}(k)<s_{d+1}(k)-s_d(k)\qquad(d\geq 1).
\]
\end{corollary}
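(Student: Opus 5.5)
Write $\sigma \coloneq s_1$, viewed as a map on positive integers. The plan is to derive both inequalities from the recursion of \Cref{cor:intro-H2}, $s_{d}(k)=s_{d-1}(\sigma(k))+\sigma(k)$, by induction on $d$. We need two basic facts about $\sigma$:
\begin{enumerate}
\item $\sigma(k)\ge k$.
\item $\sigma$ is weakly increasing.
\end{enumerate}
Fact (1) holds because $S_1(k)=\sum_{\theta\in\F_p}(t+\theta)^{-k}$ has all terms of degree $-k$, so $s_1(k)\ge k$. Thakur's formula in \cite[Section 3.1]{Thakur2009} also expresses $\sigma(k)$ as the least $m\ge k$ with $(p-1)\mid m$ and $\binom{m-1}{k-1}\not\equiv0\pmod p$. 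Fact (2) can be extracted from that description. Alternatively, it is the $d=1$ case of H3 (monotonicity), which the paper proves; I would invoke \Cref{H3}-type monotonicity if needed, but I expect to only need (1) together with monotonicity of $s_{d}$ in its argument, which can be obtained inductively from the recursion once $\sigma$ is monotone. Since $(p-1)\mid\sigma(k)$ and $\sigma(k)>0$, we also have $\sigma(k)\ge 1$.

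\emph{First inequality.} Prove $s_d(k)<s_{d+1}(k)$ for all $k>0$ by induction on $d$.
\begin{itemize}
\item Base case $d=0$: $s_0(k)=0<\sigma(k)\le s_1(k)$.
\item Inductive step: by the recursion,
\[
s_{d+1}(k)-s_d(k)=s_d(\sigma(k))-s_{d-1}(\sigma(k)).
\]
This is positive by the inductive hypothesis applied at the positive exponent $\sigma(k)$.
\end{itemize}

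\emph{Second inequality.} Set $\Delta_d(k) \coloneq s_d(k)-s_{d-1}(k)$ for $d\ge1$. The recursion gives
\[
\Delta_{d+1}(k)=\Delta_d(\sigma(k)) \qquad (d\ge1),
\]
and iterating yields $\Delta_{d}(k)=\Delta_1(\sigma^{d-1}(k))=\sigma^{d}(k)$. Indeed, $\Delta_1(m)=s_1(m)-s_0(m)=\sigma(m)$. So the jumps are exactly the iterates $\sigma(k),\sigma^2(k),\sigma^3(k),\dots$, and strict increase of the jumps is equivalent to $\sigma(m)>m$ for every $m$ in the orbit $\{\sigma^j(k): j\ge1\}$. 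It suffices to show $\sigma(m)>m$ whenever $m>0$ and $(p-1)\mid m$. This also re-proves the first inequality directly, since $s_d(k)=\sum_{j=1}^d\sigma^j(k)$ is a sum of positive terms.

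The main obstacle is this strictness: $\sigma(m)\neq m$ when $(p-1)\mid m$. Here I would show $S_1(m)$ has no term of degree $-m$. Expanding $(t+\theta)^{-m}=t^{-m}\sum_i\binom{-m}{i}\theta^i t^{-i}$ and summing over $\theta\in\F_p$, the $i=0$ coefficient is $\sum_\theta 1=p\equiv0$. So the $t^{-m}$ coefficient vanishes and $\sigma(m)>m$. This step needs no divisibility condition at all, which makes it robust. Actually this shows $\sigma(m)>m$ for all $m>0$, so fact (2) is not even needed.
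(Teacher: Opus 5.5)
Your proof is correct and is essentially the paper's. The recursion reduces both inequalities to the base cases $s_1(k)>0$ and $s_1(s_1(k))>s_1(k)$, i.e.\ to $s_1(m)>m$ for all $m>0$; the paper just cites this as a fact, whereas you usefully justify it by noting that the $t^{-m}$ coefficient of $S_1(m)$ is $\sum_{\theta\in\F_p}1=p=0$, and your closed form $s_d(k)=\sum_{j=1}^{d}\sigma^j(k)$ is simply the unrolled induction.

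Do delete the unused side claims, which are false: $s_1$ is not monotone (for $p=2$, $s_1(4)=8>s_1(5)=6$, and H3 needs $p\nmid k$), and $p-1$ need not divide $s_1(k)$ (for $p=3$, $s_1(1)=3$). So your reduction to $(p-1)\mid m$ is invalid, though harmless since you prove $s_1(m)>m$ for every $m>0$. Also, Thakur's description should read ``least $m>k$ with $(p-1)\mid(m-k)$ and $\binom{m-1}{k-1}\not\equiv0\pmod p$''.
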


The Carlitz--Goss Riemann-hypothesis analogue is already known:
Wan and Diaz--Vargas proved the prime-field case, and Sheats proved the theorem for arbitrary finite fields.
The next corollary records the point relevant here:
H2 supplies the strict Newton-polygon convexity input in Thakur's proof,
and Thakur's criterion then gives the prime-field RH conclusion.

\begin{corollary}[Prime-field Carlitz--Goss RH via Thakur's criterion]\label{cor:intro-RH}
Let $q=p$ be prime.  The Riemann-hypothesis for the Carlitz--Goss zeta function over $\F_p[t]$ is true.
Moreover, the fixed-weight Carlitz--Goss zeta function has only simple zeros lying in $K_\infty=\F_p((1/t))$.
\end{corollary}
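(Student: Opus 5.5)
The plan is to deduce \Cref{cor:intro-RH} from \Cref{cor:intro-convexity} through Thakur's criterion \cite{Thakur2009}, without redoing the analytic side.

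\textbf{Setup.} For a fixed $p$-adic weight parameter $y\in\Z_p$, the fixed-weight zeta function is the entire power series
\[
  Z(x,y)=\sum_{d\ge 0} x^{d} S_d(-y)
  \quad\text{(suitably interpreted),}
\]
in the variable $x\in C_\infty$. By the Goss--Wan interpolation, its coefficients for general $y$ are limits of the coefficients $S_d(k)$ with $k$ positive integers; more precisely, for a suitable interpretation via $-y$ approximated by integers $k_n\to -y$ $p$-adically, the valuations at infinity of the coefficients are eventually the integers $s_d(k_n)$. The zeros are read off from the Newton polygon of $Z(\cdot,y)$ with respect to $v_\infty$, whose vertices are the points $(d, s_d)$.

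\textbf{Key steps.}

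First, I would invoke \Cref{cor:intro-convexity}. It gives strict monotonicity and strictly increasing jumps $s_{d+1}(k)-s_d(k)$. Hence every point $(d,s_d(k))$ is a genuine vertex of the Newton polygon. So there is exactly one segment of horizontal length $1$ for each $d$, with pairwise distinct slopes.

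Second, I would pass from integer $k$ to the weight parameter $y$. Thakur's criterion states precisely that if, for each $d$, the values $s_d(k)$ stabilize along $p$-adic approximations and strict convexity holds, then the Newton polygon of $Z(x,y)$ has all segments of length one. The stabilization should follow from the recursion \Cref{cor:intro-H2}: $s_1(k)$ depends only on finitely many base-$p$ digits of $k$, and by induction so does $s_d(k)$. Therefore the integer strict inequalities transfer verbatim to the limit.

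Third, I would apply the standard non-archimedean Newton polygon theory. Each length-one segment yields exactly one zero of the given valuation, so all zeros are simple. Since each such zero is the unique root of its valuation, it is fixed by $\mathrm{Gal}(C_\infty/K_\infty)$-conjugation, because conjugates share valuations. It therefore lies in the completion $K_\infty=\F_p((1/t))$, using that $Z$ has coefficients in $K_\infty$ and the zero is a limit of separable roots of polynomial truncations.

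\textbf{Expected obstacle.} The main obstacle is the second step: making precise that the degree data used in the Newton polygon of $Z(\cdot,y)$ are exactly the $s_d(k)$ for suitable positive integers $k$. This includes matching the sign convention $k$ versus $-y$, since Thakur works with $S_d$ at negative integers interpolated $p$-adically. I would handle this by citing Thakur's reduction in \cite{Thakur2009}, which explicitly states that strict convexity of $d\mapsto s_d(k)$ for all $k>0$ suffices. The corollary then amounts to verifying that hypothesis, which \Cref{cor:intro-convexity} does.
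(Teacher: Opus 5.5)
Your proposal follows the paper's route: the strictly increasing jumps from \Cref{cor:intro-convexity} are fed into Thakur's Newton-polygon criterion \cite[Theorem~3]{Thakur2009}, and the paper's proof consists of nothing more than that citation. One correction to your sketch of the second step: the integers $s_d(k_n)\ge dk_n$ do not in general stabilize as $k_n\to -y$ $p$-adically, and $s_1(k)=k+\beta_1(k-1)$ is not determined by finitely many digits of $k$; what stabilizes is the normalized quantity $s_d(k_n)-dk_n=M_d(k_n-1)$ (the valuation of $t^{dk_n}S_d(k_n)$), and since subtracting the term $dk_n$, which is linear in $d$, leaves strict convexity in $d$ unchanged, your argument goes through after this normalization.
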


The same monotonicity also separates the leading term in each positive multizeta sum.
This gives the following nonvanishing statement,
recovering in the prime-field case the known theorem of Thakur \cite[Theorem~4]{Thakur2009}.

\begin{corollary}[Nonvanishing of positive multizeta values]\label{cor:intro-MZV}
Let $q=p$ be prime and let $s_1,\ldots,s_r$ be positive integers.  Then the function-field multizeta value
\[
  \zeta(s_1,\ldots,s_r)=\sum_{d_1>\cdots>d_r\geq 0}
  S_{d_1}(s_1)\cdots S_{d_r}(s_r)
\]
is nonzero.
\end{corollary}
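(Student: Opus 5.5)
We will show that in the sum defining $\zeta(s_1,\ldots,s_r)$ a single term has strictly larger $t$-degree than every other nonzero term, so no cancellation can occur. Every summand $S_{d_1}(s_1)\cdots S_{d_r}(s_r)$ is nonzero, because the recursion of \Cref{cor:intro-H2} makes each $s_d(k)$ finite by induction on $d$. Its degree is
\[
  -\bigl(s_{d_1}(s_1)+\cdots+s_{d_r}(s_r)\bigr).
\]
The candidate for the unique leading term is the one indexed by $(d_1,\ldots,d_r)=(r-1,r-2,\ldots,0)$, the componentwise smallest index allowed by $d_1>\cdots>d_r\ge 0$.

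The key input is the first part of \Cref{cor:intro-convexity}: for each fixed $k>0$, the sequence $s_d(k)$ is strictly increasing in $d$. Take any admissible $(d_1,\ldots,d_r)$. Since the $d_i$ are strictly decreasing integers with $d_r\ge 0$, we have $d_i\ge r-i$ for every $i$. Strict monotonicity then gives $s_{d_i}(s_i)\ge s_{r-i}(s_i)$, with equality exactly when $d_i=r-i$. Summing over $i$ shows that the quantity $\sum_i s_{d_i}(s_i)$ is minimized uniquely at $d_i=r-i$. So the corresponding summand has strictly larger degree than every other summand.

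The sum is infinite, so we interpret $\zeta$ in $K_\infty=\F_p((1/t))$, where it converges because $s_d(k)\to\infty$ as $d\to\infty$; this follows from strict monotonicity and integrality. Let $D=-\sum_i s_{r-i}(s_i)$ be the degree of the distinguished term. Every other term lies in $t^{D-1}\F_p[[1/t]]$, and the set of these terms is closed in the $1/t$-adic topology. Hence their sum has degree at most $D-1$. Adding the distinguished term, which is nonzero of degree exactly $D$, shows that $\zeta(s_1,\ldots,s_r)$ has degree $D$ and in particular is nonzero.

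The only substantive step is strict monotonicity in $d$, which \Cref{cor:intro-convexity} supplies, so we take it as given. If we had to derive it from \Cref{cor:intro-H2} directly, we would argue as follows. First, $s_1(k)\ge k>0$, since $s_1(k)-k$ is the first index $i$ in \eqref{eq:prelim-H2-expansion} and so is nonnegative. The recursion gives $s_1(k)=s_0(s_1(k))+s_1(k)$ and $s_d(k)=s_{d-1}(s_1(k))+s_1(k)$, so induction on $d$ gives $s_{d+1}(k)-s_d(k)=s_d(s_1(k))-s_{d-1}(s_1(k))$. This reduces the claim to the base case $s_1(k)>s_0(k)=0$, which holds because $s_1(k)\ge k>0$.
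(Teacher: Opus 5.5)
Your proposal is correct and is essentially the paper's proof: strict monotonicity of $s_d(k)$ in $d$ (from \Cref{cor:intro-convexity}) makes the index $(d_1,\ldots,d_r)=(r-1,\ldots,0)$ the unique summand of maximal $t$-degree, so its leading coefficient cannot cancel. Your convergence argument in $K_\infty$ and your rederivation of monotonicity from the recursion only make explicit what the paper leaves implicit. In the convergence step, the relevant fact is that the closed subgroup $t^{D-1}\F_p[[1/t]]$ contains the tail sum; it is not the set of terms that is closed.
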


This explains the role of H2 in \cite{Thakur2009}.
The hypothesis is not merely a local assertion about \eqref{eq:intro-H2-expansion}.
It produces the recursion from which the zero-distribution and multizeta consequences follow.

\subsection{Hypothesis H3 and its consequences}
The third hypothesis concerns monotonicity in the exponent.
Unlike H1 and H2, this statement is not restricted to prime fields.

\begin{hypothesis}
\label{H3}
For $d\geq 1$ and $k>0$ with $p\nmid k$, one has
\[
  s_d(k)<s_d(k+1).
\]
\end{hypothesis}
In other words, the valuation at infinity of the degree-$d$
power sum strictly increases when the exponent is raised from $k$ to $k+1$; that is,
\[
  -\deg_t S_d(k)<-\deg_t S_d(k+1).
\]
The restriction $d\geq 1$ is necessary because $s_0(k)=0$ for every $k$.

We prove H3 in full generality:
\begin{theorem}[H3]\label{thm:intro-H3}
Let $q=p^f$ be any prime power.  For every $d\geq 1$ and every $k>0$ with $p\nmid k$,
\[
  s_d(k)<s_d(k+1).
\]
\end{theorem}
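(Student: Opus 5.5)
The plan is to use Carlitz's expansion \eqref{eq:carlitz-full} together with Sheats' uniqueness theorem. Sheats proved, for arbitrary $q=p^f$, that in the expansion of $S_d(k+1)$ the maximum $t$-degree is attained by a unique admissible decomposition. Here a decomposition $k=\sum_i k_iq^i$ is admissible when the multinomial coefficient is nonzero mod $p$, i.e.\ the sum $\sum k_i$ has no base-$p$ carries. Granting this, $s_d(k+1)=-b(0,d)-\max\sum_i k_ib(i,d)$, and the same formula with $k-1$ in place of $k$ gives $s_d(k)$. So, writing $M(k)$ for the maximum over admissible decompositions of $k$, H3 becomes the strict inequality
\[
  M(k)<M(k-1)\qquad\text{whenever } p\nmid k,\ k>0 .
\]
Here $k=0$ gives $M(0)=0$ via the trivial decomposition.

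To prove this I would take the optimal admissible decomposition $(k_0^*,\dots,k_d^*)$ of $k$ and build an admissible decomposition of $k-1$ with strictly larger value. Since $p\nmid k$ and $k\equiv k_0^*\pmod q$, the number $k_0^*$ is the only term that can absorb a unit at place $q^0$. There are two cases.

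\textbf{Case $k_0^*\ge1$.} Replace $k_0^*$ by $k_0^*-1$, keeping the other $k_i^*$. The value changes by $-b(0,d)>0$, since $b(0,d)<0$ for $d\ge1$. It remains to check admissibility: subtracting $1$ from a summand of a carry-free sum may create a borrow. I expect this to be fine, because if a collection of numbers adds without carry in base $p$, then lowering one of them digitwise keeps the addition carry-free. The point to verify is that $k_0^*-1$ is obtained from $k_0^*$ by decreasing its lowest nonzero digit and raising lower digits to $p-1$. Those raised digits could collide with digits of other $k_i^*$, so lowering $k_0^*$ is not automatically safe. I would handle this with the extremality of $k^*$: if another $k_i^*$ occupied those low digit positions, one could shift mass to produce a better decomposition of $k$, contradicting optimality.

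\textbf{Case $k_0^*=0$.} Then $p\nmid k$ forces some digit of $k$ at place $q^0$, which is a contradiction when $f=1$. For $f>1$, I would instead lower some $k_i^*$ with $i\ge1$ by one and add $q^i-1$ to $k_0$ (and possibly to intermediate places). I would then compare values using the explicit formula \eqref{eq:intro-bid}, which gives $b(i,d)-(q^i-1)b(0,d)\dots$ and should be shown negative.

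The main obstacle is admissibility (carry-freeness) of the modified decomposition, especially in the non-prime case, where base-$p$ digits and the powers of $q$ interact. My first attempt there is to work with Sheats' explicit description of the optimal decomposition: Sheats' theorem is constructive, greedy on base-$p$ digits. I would read off directly how the optimal decompositions for $k-1$ and for $k$ differ when $p\nmid k$, and show that the difference in value is exactly a positive combination of $-b(i,d)$ terms.
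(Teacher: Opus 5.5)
The load-bearing first step is not available. What you attribute to Sheats, namely that for every $q=p^f$ the maximal-degree summand in Carlitz's expansion \eqref{eq:carlitz-full} is unique, is precisely Hypothesis H1 for arbitrary $q$. Sheats' Lemma~1.3 is a different statement. It gives uniqueness of the maximizer of $X_1+2X_2+\cdots+mX_m$ over carry-free decompositions $N=X_1\oplus\cdots\oplus X_m$ with $X_j>0$ and $(q-1)\mid X_j$ for $j<m$. That controls the expansion of \Cref{lem:multinomial_expansion}, which comes from $a^{-k}=t^{-dk}(1+\theta_1t^{-1}+\cdots+\theta_dt^{-d})^{-k}$ summed over the $\theta_i$. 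It does not control Carlitz's expansion, and Sheats' greedy description likewise says nothing about the Carlitz optimum. The paper proves H1 only for $q=p$, by a separate argument, and explicitly makes no claim for $f>1$.

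You need this in an essential direction. Carlitz's formula unconditionally gives only $\deg S_d(k+1)\le b(0,d)+M(k)$, i.e.\ a lower bound for $s_d(k+1)$, which is harmless. But you also need the upper bound $s_d(k)\le -b(0,d)-M(k-1)$, i.e.\ that the top term for decompositions of $k-1$ does not cancel, and that is exactly H1. So for non-prime $q$ your argument rests on an open hypothesis.

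The repair is to switch expansions. \Cref{lem:H3-minformula}, which is where Sheats actually enters, gives $s_d(k)=dk+M_d(k-1)$, where $M_d(k-1)$ is the minimum of $m_1+2m_2+\cdots+dm_d$ over $\mathcal{T}_{d,k-1}$. Since $p\nmid k$, if $k\oplus m_1\oplus\cdots\oplus m_d$ is carry-free then so is the same sum with $k-1$. Hence $\mathcal{T}_{d,k}\subseteq\mathcal{T}_{d,k-1}$, so $M_d(k)\ge M_d(k-1)$ and $s_d(k+1)-s_d(k)\ge d$.

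Separately, the combinatorial half is much easier than you make it. For any admissible decomposition of $k$, $k\equiv k_0\pmod q$ and $p\mid q$ give $p\nmid k_0$. So your case $k_0^*=0$ never occurs, for any $f$. Moreover, $k_0\mapsto k_0-1$ only lowers the nonzero units digit of $k_0$, so carry-freeness is automatically preserved: there are no borrows and no appeal to optimality of $k^*$ is needed. This gives $M(k-1)\ge M(k)-b(0,d)$ outright.

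Consequently, for $q=p$ your route is a valid alternative once combined with \Cref{thm:intro-H1}. It even yields the sharper gap $s_d(k+1)-s_d(k)\ge -b(0,d)=q+q^2+\cdots+q^d$ rather than $\ge d$. But it cannot prove the theorem as stated for $q=p^f$ with $f>1$.
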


The rest of this paper is organized as follows (see \Cref{fig:skeleton}).
\Cref{sec:prelim} fixes notation and recalls the
elementary preliminaries needed for subsequent proofs.
\Cref{sec:H1} proves \Cref{H1} for prime fields by reducing Carlitz's formula to
a finite assignment problem and identifying its unique greedy optimum.
\Cref{sec:H2H3} provides a formula for $S_d(k)$ used in both H2 and H3,
and applies Sheats' theorem.
\Cref{sec:H2} proves \Cref{H2} for prime fields by introducing reciprocal digit
slots and proving the block-minimization lemma that yields the recursion in \cite{Thakur2009} for $s_d(k)$.
\Cref{sec:H3} proves \Cref{H3} for arbitrary finite fields.

\begin{figure}[hb]
  \centering
  \begin{tikzpicture}[x=1cm, y=0.8cm]
    \node (lucas) [result] at (0,6) {Lucas (\S\ref{sec:lucas})};
    \node (pow) [result] at (5,6) {Pow sum (\S\ref{sec:power_sum})};
    \node (block) [result] at (10,6) {Block ineq (\S\ref{sec:block})};
    \node (lem41) [result, text width = 5cm] at (2.5,4) {Expansion (\Cref{lem:multinomial_expansion})};
    \node (minform) [result, text width = 5cm] at (2.5,2) {Min formula (\Cref{lem:H3-minformula})};
    \node (H1) at (0,0) [result] {\Cref{H1} (\S\ref{sec:H1})};
    \node (H2) at (5,0) [result] {\Cref{H2} (\S\ref{sec:H2})};
    \node (H3) at (10,0) [result] {\Cref{H3} (\S\ref{sec:H3})};
    \draw [arrow] (lucas) -- (lem41);
    \draw [arrow] (pow) -- (lem41);
    \draw [arrow] (lem41) -- node[anchor=east] {using Sheats' theorem} (minform);
    \draw [arrow] (minform) -- (H2);
    \draw [arrow] (minform) -- (H3);
    \draw [arrow] (block) -- (H2);
  \end{tikzpicture}
  \caption{An outline of the components of this paper.}
  \label{fig:skeleton}
\end{figure}

\section{Preliminaries}\label{sec:prelim}
We state the elementary preliminaries required for this paper.

\subsection{Lucas theorem and its variants}
\label{sec:lucas}
We record the elementary digit facts used throughout.

\begin{definition}[Carry-free addition]\label{def:carryfree}
For nonnegative integers $x_1,\ldots,x_r$, we write
\[
  x_1\oplus x_2\oplus\cdots\oplus x_r
\]
when the addition is carry-free in base-$p$.  Equivalently, if we have
\[
  x_i=\sum_{e\geq 0}x_{i,e}p^e
  \qquad {\text {\rm and}}\qquad 0\leq x_{i,e}\leq p-1,
\]
then we have
\[
  \sum_{i=1}^r x_{i,e}\leq p-1
  \qquad\text{for every }e.
\]
\end{definition}

The first tool is Lucas' theorem in its binomial form,
which provides a bridge between nonvanishing modulo $p$ and carry-free decompositions.

\begin{lemma}[Lucas' theorem]\label{lem:Lucas-binomial}
Let $m,n\geq 0$, and write
\[
  m=\sum m_ep^e\qquad {\text {\rm and}}\qquad n=\sum n_ep^e,
  \qquad 0\leq m_e,n_e\leq p-1.
\]
Then
\[
  \binom{n}{m}\equiv\prod_e\binom{n_e}{m_e}\pmod p.
\]
In particular, $\binom{n}{m}\not\equiv 0\pmod p$ if and only if $m_e\leq n_e$ for every $e$;
that is, if $m \oplus (n-m)$ is carry-free in base $p$.
\end{lemma}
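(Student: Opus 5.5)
The plan is to prove Lucas' theorem through the polynomial identity $(1+x)^{p}=1+x^{p}$ in $\F_p[x]$, which is the Frobenius endomorphism applied to $1+x$. Iterating gives $(1+x)^{p^e}=1+x^{p^e}$ for every $e\ge 0$. Hence, writing $n=\sum_e n_ep^e$ in base $p$,
\[
  (1+x)^n=\prod_e\bigl((1+x)^{p^e}\bigr)^{n_e}=\prod_e\bigl(1+x^{p^e}\bigr)^{n_e}
  =\prod_e\sum_{c_e=0}^{n_e}\binom{n_e}{c_e}x^{c_ep^e}
\]
in $\F_p[x]$. We then compare the coefficient of $x^m$ on both sides. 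On the left it is $\binom{n}{m}\bmod p$. On the right, every choice $(c_e)$ with $0\le c_e\le n_e\le p-1$ contributes to the monomial $x^{\sum c_ep^e}$. By uniqueness of base-$p$ expansions (each $c_e<p$), the exponent $\sum c_ep^e$ equals $m$ only for $c_e=m_e$ for all $e$, and even that tuple is admissible only when $m_e\le n_e$ for all $e$. So the coefficient is $\prod_e\binom{n_e}{m_e}$, using the convention $\binom{n_e}{m_e}=0$ when $m_e>n_e$. This proves the congruence. The one point to handle carefully is this uniqueness of the exponent representation, since it is what prevents distinct terms from colliding.

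For the ``in particular'' clause, note that each factor $\binom{n_e}{m_e}$ with $0\le m_e\le n_e\le p-1$ is a ratio of factorials of integers less than $p$. It is therefore a nonzero integer prime to $p$. So the product is nonzero mod $p$ exactly when $m_e\le n_e$ for every $e$.

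It remains to translate this digit condition into the carry-free condition on $m\oplus(n-m)$. Suppose $m_e\le n_e$ for all $e$. Then $n-m=\sum_e(n_e-m_e)p^e$ with digits $n_e-m_e\in[0,p-1]$, so these are the base-$p$ digits of $n-m$. The digit sums $m_e+(n_e-m_e)=n_e\le p-1$, so the addition is carry-free. Conversely, suppose $m\oplus(n-m)$ is carry-free with digits $m_e$ and $r_e$. Then $m_e+r_e\le p-1$, so these sums are the base-$p$ digits of $m+(n-m)=n$, giving $n_e=m_e+r_e\ge m_e$. I expect no step to be a real obstacle; the only care needed is the uniqueness-of-digits argument, used twice.
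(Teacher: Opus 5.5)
Your proof is correct and complete. The Frobenius identity $(1+x)^{p^e}=1+x^{p^e}$ in $\F_p[x]$ holds, and the coefficient comparison via uniqueness of base-$p$ digits is valid. That comparison also covers $m>n$, since then some $m_e>n_e$ and both sides vanish. The observation that $\binom{n_e}{m_e}$ is prime to $p$ when $m_e\le n_e\le p-1$ is right, and so is the digit-by-digit translation into the carry-free condition for $m\oplus(n-m)$.

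The paper itself gives no proof of this lemma. It is quoted as the classical Lucas theorem, and only the multinomial form (\Cref{lem:mult-Lucas}) is derived from it, by factoring into binomials. Your standard generating-function argument therefore supplies the omitted proof.
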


We will mostly use the following multinomial form,
since Carlitz's formula naturally involves multinomial coefficients.

\begin{lemma}[Lucas' theorem, multinomial form]\label{lem:mult-Lucas}
Let $R=r_1+\cdots+r_m$.  Then
\[
  \binom{R}{r_1,\ldots,r_m}\not\equiv 0\pmod p
\]
if and only if $r_1 \oplus r_2 \oplus \dots \oplus r_m$ is carry-free in base-$p$.
\end{lemma}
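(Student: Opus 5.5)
The plan is to deduce the multinomial form from the binomial form (\Cref{lem:Lucas-binomial}) by induction on $m$, using the standard factorization of a multinomial coefficient into a product of binomial coefficients:
\[
  \binom{R}{r_1,\ldots,r_m}
  =\binom{r_1}{r_1}\binom{r_1+r_2}{r_2}\binom{r_1+r_2+r_3}{r_3}\cdots\binom{R}{r_m}.
\]
Since $p$ is prime, the left side is nonzero modulo $p$ exactly when every factor is nonzero modulo $p$. Set $P_j \coloneq r_1+\cdots+r_j$. By \Cref{lem:Lucas-binomial}, the factor $\binom{P_j}{r_j}$ is nonzero modulo $p$ if and only if the addition $P_{j-1}\oplus r_j$ is carry-free.

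The claim then reduces to an equivalence about digits: the full sum $r_1\oplus\cdots\oplus r_m$ is carry-free if and only if each partial addition $P_{j-1}\oplus r_j$ is carry-free, for $j=2,\ldots,m$. Both directions come from induction on $j$.

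\textbf{Forward direction.} Suppose the digitwise sums satisfy $\sum_i r_{i,e}\le p-1$ for every $e$. Then the digits of $P_{j-1}$ are exactly $\sum_{i<j} r_{i,e}$, since these are already at most $p-1$ and so no carry occurs in forming $P_{j-1}$. Adding $r_j$ gives digit sums $\sum_{i\le j} r_{i,e}\le p-1$, so that addition is also carry-free.

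\textbf{Reverse direction.} Suppose each partial addition is carry-free. By induction on $j$, the base-$p$ digits of $P_j$ are $\sum_{i\le j} r_{i,e}$: carry-freeness of $P_{j-1}\oplus r_j$ means precisely that the digitwise sum stays at most $p-1$, so these are the digits of $P_j$. Taking $j=m$ gives $\sum_i r_{i,e}\le p-1$ for every $e$, which is the condition in \Cref{def:carryfree}.

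None of these steps is a real obstacle. The only point needing care is the identification of the digits of the partial sums $P_j$ with the digitwise sums $\sum_{i\le j} r_{i,e}$. This identification is valid precisely because no carries occur, so the induction has to track that fact explicitly rather than take it for granted. The edge cases are harmless: $m=1$ is trivial, and zero entries $r_i=0$ contribute factors equal to $1$ and zero digits.
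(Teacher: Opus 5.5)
Your proposal is correct and follows the paper's own argument: factor the multinomial coefficient into the binomials $\binom{P_j}{r_j}$, apply \Cref{lem:Lucas-binomial} to each factor, and identify the conjunction of the partial carry-free conditions with the digitwise bound $\sum_i r_{i,e}\le p-1$. Your induction showing that the digits of $P_j$ are $\sum_{i\le j} r_{i,e}$ simply makes explicit the step the paper asserts in one line.
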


\begin{proof}
Write the multinomial coefficient as a product of binomial coefficients:
\[
  \binom{R}{r_1,\ldots,r_m}
  =\binom{r_1+\cdots+r_m}{r_m}
   \binom{r_1+\cdots+r_{m-1}}{r_{m-1}}\cdots
   \binom{r_1+r_2}{r_2}.
\]
Each factor is nonzero modulo $p$ exactly when the indicated addition has no carry.
This is equivalent to saying that, at each base-$p$ digit position,
the digits of $r_1,\ldots,r_m$ add to at most $p-1$.
\end{proof}

\subsection{Finite field power sums}
\label{sec:power_sum}
The other elementary input is the standard finite-field power sum.
It is responsible for the divisibility conditions by $q-1$ that appear
throughout the later minimization problems.

\begin{lemma}[Finite-field power sums]\label{lem:field-sums}
For $m\geq 0$,
\[
  \sum_{x\in\F_q}x^m=
  \begin{cases}
    -1, & m>0\text{ and }(q-1)\mid m,\\
    0, & \text{otherwise}.
  \end{cases}
\]
Here we consider $0^0=1$ for $m=0$.
\end{lemma}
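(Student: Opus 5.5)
The plan is to use the multiplicative group $\F_q^\times$, which is cyclic of order $q-1$, and split into cases according to $m$.

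First, the case $m=0$. With the convention $0^0=1$, every summand equals $1$, so the sum is $q\cdot 1=0$ in $\F_q$. This matches the ``otherwise'' branch of the statement.

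Next, suppose $m>0$. The term $x=0$ now contributes $0^m=0$, so the sum reduces to $\sum_{x\in\F_q^\times}x^m$. Choose a generator $g$ of $\F_q^\times$. The sum becomes the geometric series $\sum_{j=0}^{q-2}g^{jm}$.

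If $(q-1)\mid m$, then $g^m=1$, every term equals $1$, and the sum is $q-1=-1$ in $\F_q$.

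If $(q-1)\nmid m$, then $g^m\neq 1$, and the geometric series equals
\[
\frac{g^{m(q-1)}-1}{g^m-1}=0,
\]
since $g^{q-1}=1$. An alternative that avoids the generator is to choose $c\in\F_q^\times$ with $c^m\ne 1$. Multiplying by $c$ permutes $\F_q^\times$, so the sum $\Sigma$ satisfies $c^m\Sigma=\Sigma$, which forces $\Sigma=0$.

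There is no real obstacle here. The only point that needs care is the convention $0^0=1$ in the case $m=0$: it is what makes that case fall into the ``otherwise'' branch, with value $0$ rather than $-1$.
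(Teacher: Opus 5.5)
Your proof is correct and follows the same route as the paper. The case $m=0$ gives $q=0$; for $m>0$ the zero term drops out, and the sum over the cyclic group $\F_q^\times$ is $q-1=-1$ or $0$ according to whether $(q-1)\mid m$, where you also supply the geometric-series (or translation-invariance) detail that the paper leaves implicit.
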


\begin{proof}
If $m=0$, the sum is $q$, which is $0$ in characteristic $p$.
If $m>0$, the zero term contributes $0$,
and the nonzero elements of $\F_q$ form a cyclic group of order $q-1$.
The sum of the $m$th powers in this cyclic group is $0$ unless $(q-1)\mid m$;
in that case it is instead $q-1=-1$ in $\F_q$.
\end{proof}

\subsection{Block-rearrangement inequality}
\label{sec:block}
This part will only be used in \Cref{sec:H2} to prove \Cref{H2}.
Throughout this section fix a positive integer $n$.
Let $M$ be any infinite multiset of positive real numbers, listed in nondecreasing order as
\[ c_1\leq c_2\leq c_3\leq\cdots.  \]
We will refer to the $c_i$ as \emph{slots} (that we imagine putting weights in).

The block-rearrangement inequality is an easy variant of the rearrangement inequality.
Let's describe it in words before introducing the relevant notation.
\begin{question}
  Let $d \ge 1$.
  Suppose we need to pick out $dn$ elements of $M$ and compute a weighted sum,
  where $n$ of the elements have weight $1$,
  $n$ of the elements have weight $2$, \dots,
  and $n$ elements have weight $d$.
  How can we minimize this weighted sum?
\end{question}
\begin{answer}
  This is easy:
  the rearrangement inequality says we should take the $dn$ smallest elements of $M$,
  and pair the largest weights with the smallest numbers.
\end{answer}
We introduce notation corresponding to what we just described.
For $r\geq 1$, let $B_r(M)$ be the $r$th consecutive block of $n$ elements:
\[
  B_r(M) \coloneq \{c_{(r-1)n+1},\ldots,c_{rn}\},
\]
and put
\[
  \beta_r(M) \coloneq \sum_{c\in B_r(M)}c.
\]
For $d\geq 1$, define
\[
  \Phi_d(M) \coloneq d\beta_1(M)+(d-1)\beta_2(M)+\cdots+\beta_d(M),
\]
and set $\Phi_0(M) \coloneq 0$.
The point is that $\Phi_d(M)$ is the minimum we are describing:

\begin{lemma}[Block minimization]\label{lem:H2-block-min}
Let $T_1,\ldots,T_d$ be pairwise disjoint finite submultisets of $M$ with $|T_i|=n$ for every $i$.
Then
\[
  \sum_{i=1}^d i\sum_{c\in T_i}c\geq \Phi_d(M).
\]
Equality holds precisely when the largest weight $d$ receives the first block $B_1(M)$,
the next largest weight receives $B_2(M)$, and so on, up to interchanging equal-valued slots.
\end{lemma}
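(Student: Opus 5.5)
The plan is to rewrite the weighted sum as a sum of ``tail sums'' and then bound each tail separately. Put $U_j \coloneq T_j\cup T_{j+1}\cup\cdots\cup T_d$ for $1\le j\le d$. Each $c\in T_i$ lies in $U_1,\ldots,U_i$, so it is counted exactly $i$ times, and Abel summation gives
\[
  \sum_{i=1}^d i\sum_{c\in T_i}c=\sum_{j=1}^d \sum_{c\in U_j}c .
\]
The same identity applied to the blocks gives
\[
  \Phi_d(M)=\sum_{j=1}^d \sum_{r=1}^{d-j+1}\beta_r(M).
\]
The right-hand side here is the sum of the $(d-j+1)n$ smallest elements of $M$, summed over $j$.

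For the inequality, note that $U_j$ is a submultiset of $M$ with exactly $(d-j+1)n$ elements, because the $T_i$ are pairwise disjoint. Any submultiset of size $m$ of a nondecreasingly listed multiset has sum at least $c_1+\cdots+c_m$. To see this, list its elements increasingly as $u_1\le\cdots\le u_m$ and compare: the $k$th smallest element of a submultiset is at least $c_k$. Applying this to each $U_j$ and summing over $j$ gives $\sum_i i\sum_{T_i}c\ge\Phi_d(M)$.

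For the equality case, equality in the sum forces equality for every $j$, since each term individually satisfies $\ge$. Hence each $U_j$ has sum exactly $c_1+\cdots+c_{(d-j+1)n}$. This means that, as a multiset of values, $U_j$ consists of the $(d-j+1)n$ smallest values, up to interchanging slots of equal value at the boundary. Taking differences then identifies $T_j=U_j\setminus U_{j+1}$ with the values of $B_{d-j+1}(M)$. In particular $T_d=U_d$ is the first block and $T_1$ is the $d$th block, which is exactly the stated assignment. Conversely, that assignment attains $\Phi_d(M)$ by the identity above.

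The main obstacle is making ``up to interchanging equal-valued slots'' precise when values repeat across a block boundary. The approach is to phrase the equality statement as an equality of value-multisets, namely $\{$values of $U_j\}=\{c_1,\ldots,c_{(d-j+1)n}\}$. This follows from the termwise comparison $u_k\ge c_k$: if the sums agree, then $u_k=c_k$ for all $k$, and no further case analysis is needed.
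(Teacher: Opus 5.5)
Your proof is correct, but it takes a different route from the paper's proof of this lemma.

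The paper uses a two-stage exchange argument. First, a minimizer must occupy the $dn$ smallest slots: replacing a selected slot $a$ by a smaller unselected slot $b$ in the same $T_i$ lowers the sum by $i(a-b)$. Second, swapping $a\in T_i$ and $b\in T_j$ with $i<j$ and $a<b$ lowers the sum by $(j-i)(b-a)$, so larger weights must receive smaller slots.

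Your argument is instead the tail-sum identity $\sum_i i\sum_{c\in T_i}c=\sum_j\sum_{c\in U_j}c$ together with the termwise bound $u_k\ge c_k$. This is exactly the layer-cake argument the paper uses one lemma later for the extraction inequality (\Cref{lem:H2-extraction}), where the sets $E_h$ play the role of your $U_j$.

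What each route buys:
\begin{itemize}
\item The exchange proof is the classical rearrangement argument and shows directly why the block assignment wins. As written, though, it reasons about properties of a minimizer whose existence among infinitely many choices is left tacit, and it treats ties only informally.
\item Your route bounds every configuration directly and needs no existence of a minimizer. It also gives a precise equality statement: the value multiset of $T_j$ equals that of $B_{d-j+1}(M)$. That is exactly the form the paper later needs, in the remark following \Cref{thm:H2-slot-formula}, to pin down the numerical tuple $(m_1,\ldots,m_d)=(\beta_d(k-1),\ldots,\beta_1(k-1))$.
\item Your approach also lets a single proof template serve both lemmas of \S\ref{sec:block}.
\end{itemize}
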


\begin{proof}
The union $T_1\cup\cdots\cup T_d$ contains $dn$ elements.
A minimizer must use the $dn$ smallest elements of $M$:
if a selected slot $a$ is larger than an unselected slot $b$,
then replacing $a$ by $b$ in the same $T_i$ lowers the weighted sum by $i(a-b)>0$.

Now distribute these first $dn$ slots among the $T_i$.
If $i<j$, $a\in T_i$, $b\in T_j$, and $a<b$,
then swapping $a$ and $b$ changes the contribution from $ia+jb$ to $ib+ja$.  The decrease is
\[
  (ia+jb)-(ib+ja)=(j-i)(b-a)>0.
\]
Thus a minimizer must assign smaller slots to larger weights.
Therefore weight $d$ receives the first block, weight $d-1$ receives the second block, and so on.
This gives the value $\Phi_d(M)$.
The only ambiguity comes from slots with equal numerical values, whose interchange does not affect the sum.
\end{proof}

To prove H2, we need a slightly more flexible form of the block-minimization lemma.
It compares the optimal block choice with the situation in which an initial
admissible submultiset has already been extracted.

\begin{lemma}[Extraction inequality]\label{lem:H2-extraction}
Let $U$ be a finite submultiset of $M$ satisfying
\[
  |U|\geq n,
  \qquad n\mid |U|.
\]
Then
\begin{equation}\label{eq:H2-extraction}
  d\sum_{u\in U}u+\Phi_{d-1}(M\setminus U)\geq \Phi_d(M).
\end{equation}
Equality holds if and only if $|U|=n$ and $U=B_1(M)$, up to interchanging equal-valued slots.
\end{lemma}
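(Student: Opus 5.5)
Let $|U| = mn$ with $m \ge 1$. The plan is to reduce \eqref{eq:H2-extraction} to \Cref{lem:H2-block-min}. We do this by reading the left-hand side as a weighted sum over pairwise disjoint $n$-element submultisets of $M$ with weights $1,\ldots,d$, plus a nonnegative surplus.

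\textbf{Splitting $U$.} Write $U$ in nondecreasing order and cut it into $m$ consecutive pieces $U_1,\ldots,U_m$, each of size $n$, so that $U_1$ consists of the $n$ smallest elements of $U$. Let $T'_1,\ldots,T'_{d-1}$ be the blocks of $M\setminus U$ realizing $\Phi_{d-1}(M\setminus U)$, where $T'_r = B_r(M\setminus U)$ carries weight $d-r$. Then
\[
d\sum_{u\in U}u+\Phi_{d-1}(M\setminus U) \;=\; d\sum_{c\in U_1}c+\sum_{r=1}^{d-1}(d-r)\sum_{c\in T'_r}c \;+\; d\sum_{j=2}^m\sum_{c\in U_j}c.
\]
The family $U_1,T'_1,\ldots,T'_{d-1}$ consists of $d$ pairwise disjoint $n$-subsets of $M$, with weights $d,d-1,\ldots,1$. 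By \Cref{lem:H2-block-min}, the first two terms on the right are at least $\Phi_d(M)$. The last term is strictly positive when $m\ge2$, because $M$ consists of positive reals. This proves the inequality, and shows that equality forces $m=1$.

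\textbf{Equality case.} When $m=1$, equality in \Cref{lem:H2-block-min} forces the weight-$d$ set $U=U_1$ to be $B_1(M)$, up to interchanging equal-valued slots. Conversely, if $U=B_1(M)$, then $M\setminus U$ is the list $c_{n+1},c_{n+2},\ldots$, so $B_r(M\setminus U)=B_{r+1}(M)$. Hence
\[
dn\text{-term}+\Phi_{d-1}(M\setminus U) \;=\; d\beta_1+\sum_{r=1}^{d-1}(d-r)\beta_{r+1}(M) \;=\; \Phi_d(M).
\]

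\textbf{Main obstacle.} The only delicate point is the phrase ``up to interchanging equal-valued slots''. We must check that the equality clause of \Cref{lem:H2-block-min} transfers to $U$ exactly in this sense, even though $U$ is a multiset and $B_1(M)$ is defined only up to ties. I expect this to be routine bookkeeping: every value strictly below the value $c_n$ must appear in $U$, and the remaining elements of $U$ must all have value $c_n$. The case $d=1$ needs no separate treatment, since there $\Phi_0=0$ and the argument reduces to $\sum_{u\in U} u\ge\beta_1(M)$.
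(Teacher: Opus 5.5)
Your proof is correct, but it takes a different route from the paper's.

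\textbf{Your route.} You derive the extraction inequality as a corollary of \Cref{lem:H2-block-min}. Peeling off $n$ elements $U_1$ of $U$ to serve as the weight-$d$ set, alongside the blocks $B_r(M\setminus U)$, gives a legitimate competitor in the block-minimization problem. The leftover term $d\sum_{j\ge 2}\sum_{c\in U_j}c$ is strictly positive when $|U|\ge 2n$. The equality clause, including the ``up to interchanging equal-valued slots'' proviso, is then inherited directly from the block lemma. So the bookkeeping you flag as the main obstacle is already done for you.

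\textbf{The paper's route.} The paper argues directly by a layer-cake decomposition. It assigns weight $d$ to all of $U$ and weights $d-1,\dots,1$ to the successive blocks of $M\setminus U$. It then writes the left side as $\sum_{h=1}^d\sum_{c\in E_h}c$, where $E_h$ is the set of slots of weight at least $h$. Each $E_h$ has at least $(d+1-h)n$ elements, so it is compared with the prefix sum $c_1+\cdots+c_{(d+1-h)n}$ of $M$. Equality is read off from the single layer $h=d$, where $E_d=U$.

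\textbf{What each buys.} Your version is shorter given \Cref{lem:H2-block-min}, and it makes the extraction inequality visibly a consequence of block minimization. The paper's version is self-contained and never invokes \Cref{lem:H2-block-min}. It absorbs the surplus $|U|>n$ into the same prefix comparison rather than treating it as a separate positive error term. Its superlevel-set technique would equally prove the block lemma itself.
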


\begin{proof}
Give weight $d$ to every slot in $U$.
Among the remaining slots, give weight $d-1$ to the first block of size $n$, weight $d-2$ to the next block,
and so on down to weight $1$; all later slots have weight $0$.
Then the left side of \eqref{eq:H2-extraction} is $\sum_c w(c)c$.

For $1\leq h\leq d$, let $E_h$ be the set of slots with weight at least $h$.
The layer-cake identity for nonnegative integer weights gives
\[
  \sum_c w(c)c=\sum_{h=1}^d\sum_{c\in E_h}c.
\]
Because $|U|\geq n$ and $n\mid |U|$, the set $E_h$ contains at least $(d+1-h)n$ slots:
it contains $U$ and the first $(d-h)n$ slots of $M\setminus U$.  Therefore
\[
  \sum_{c\in E_h}c\geq c_1+c_2+\cdots+c_{(d+1-h)n}.
\]
Summing this inequality over $h=1,\ldots,d$ gives
\[
  \sum_c w(c)c\geq d\beta_1(M)+(d-1)\beta_2(M)+\cdots+\beta_d(M)=\Phi_d(M).
\]

For equality to hold, first $|U|$ must be exactly $n$;
otherwise $E_d=U$ contains at least $2n$ positive slots,
and the comparison with the first $n$ slots is strict.
With $|U|=n$, equality in the $h=d$ inequality forces $U$ to be the first block $B_1(M)$,
up to equal-valued slots.  Conversely, if $U=B_1(M)$, the two sides are exactly equal.
\end{proof}

\section{Proof of \Cref{H1} over prime fields}\label{sec:H1}
Throughout this section, we assume that $q=p$ is prime.
This assumption is used in exactly one conceptual place:
the no-carry condition supplied by Lucas' theorem is now a base-$q$ condition, because $q=p$.

\subsection{The optimization problem}
We unwind the definitions involved in the problem more explicitly now.

\subsubsection{Notation for base-$q$ digits}
For each $k_i$, we let $a_{i,n}$ denote its base-$q$ digits
\[ k_i = \sum_{n\geq 0}a_{i,n}q^n, \qquad 0\leq a_{i,n}\leq q-1.  \]
For $k$ itself, we use $C_n$ for its base-$q$ digits,
\[ k = \sum_{n \ge 0} C_n q^n, \qquad 0 \le C_n \le q-1. \]
For convenience, $a_{i,n} = 0$ if $n < 0$.

\subsubsection{Admissible decompositions}
We will say a decomposition
\[ k=k_0+k_1q+\cdots+k_dq^d \]
is \emph{admissible} if $k_0+\cdots+k_d$ is carry-free in base-$q$.
By \Cref{lem:mult-Lucas},
the admissible decompositions are precisely the decompositions that contribute
nonzero terms to Carlitz's formula.

We can visualize the hypotheses by drawing the grid shown in \Cref{fig:addition_table}.
This is a grid which is infinite to the left,
with columns \dots, $\Col_2$, $\Col_1$, $\Col_0$, with $d+1$ rows.
In the $i$\ts{th} row of $\Col_m$ we imagine placing $a_{i,m-i}$ stones.
\begin{figure}[ht]
  \centering
  \[
    \begin{array}{cc|cccccccc}
      && \dots & \Col_{d} & \Col_{d-1} & \dots & \Col_3 & \Col_2 & \Col_1 & \Col_0 \\ \hline
      \text{Row $0$} & k_0 & \dots & a_{0,d} & a_{0,d-1} & \dots & a_{0,3} & a_{0,2} & a_{0,1} & a_{0,0} \\
      \text{Row $1$} & q k_1 & \dots & a_{1,d-1} & a_{1,d-2} & \dots & a_{1,2} & a_{1,1} & a_{1,0} \\
      \text{Row $2$} & q^2 k_2 & \dots & a_{2,d-2} & a_{2,d-3} & \dots & a_{2,1} & a_{2,0} \\
      \text{Row $3$} & q^3 k_3 & \dots & a_{3,d-3} & a_{3,d-4} & \dots & a_{3,0} \\
      \multicolumn{1}{c}{\vdots} & \vdots & \dots & \vdots & \vdots & \iddots \\
      \text{Row $d-1$} & q^{d-1} k_{d-1} & \dots & a_{d-1,1} & a_{d-1,0} \\
      \text{Row $d$} & q^d k_d & \dots & a_{d,0} \\ \hline
      \text{Sum} & k & \dots & C_d & C_{d-1} & \dots & C_3 & C_2 & C_1 & C_0
    \end{array}
  \]
  \caption{Admissible decompositions, drawn as an addition table in base-$q$.
    We imagine placing $a_{i,m-i}$ stones in the $i$\ts{th} row of $\Col_m$.}
  \label{fig:addition_table}
\end{figure}

A decomposition corresponds to a way of filling in the digits in
\Cref{fig:addition_table} such that both of the following
capacity constraints hold:
\begin{description}
  \item[Column capacity] We require that summing the addition table in \Cref{fig:addition_table}
    indeed gives
    \[ \sum q^i k_i = \sum C_n q^n. \]

    One would expect \emph{a priori} that there could be some base-$q$ carrying.
    However, \Cref{lem:H1-carry-elimination} will show this case never produces
    optimal values.
    Thus we can regard this condition as saying $\Col_m$ has exactly $C_m$ stones.

  \item[Diagonal capacity]
    For each $n$, let $\Diag_n$ denote the cells labeled $a_{i,n}$ for $0 \le i \le d$.
    Then for every $n$, admissibility requires that
    \begin{equation} \label{eq:source-capacity-H1}
      \text{number of stones in } \Diag_n = \sum_{i=0}^d a_{i,n} \leq q-1.
    \end{equation}
\end{description}

\subsubsection{The objective function $\Phi$}
Finally we compute the objective function.
Using \eqref{eq:intro-bid} and \eqref{eq:intro-degree-contribution},
we see that we are trying to minimize
\begin{align*}
  -\sum_{i=0}^d k_i b(i,d)
  &=\sum_{i=0}^d k_i\left(\frac{q^{d+1}-q^{i+1}}{q-1}+iq^i\right)
  =\frac{q^{d+1}}{q-1}\sum_{i=0}^d k_i
    -\frac{q}{q-1}\sum_{i=0}^d k_iq^i+
     \sum_{i=0}^d iq^ik_i\\
  &=\frac{1}{q-1}
  \left( q^{d+1}\sum_{i=0}^d k_i+(q-1)\sum_{i=0}^d iq^ik_i \right)
  -\frac{qk}{q-1}.
\end{align*}
The final term is fixed once $k$ is fixed.
Hence, if we define
\begin{equation}\label{eq:Phi-H1}
  \begin{aligned}
    g_i &\coloneq q^{d+1-i} + (q-1) \cdot i \\
    \Phi(k_0,\ldots,k_d) &\coloneq q^{d+1}\sum_{i=0}^d k_i+(q-1)\sum_{i=0}^d iq^ik_i
    = \sum_{i,n} q^{n+i} g_i \cdot a_{i,n}
  \end{aligned}
\end{equation}
then \Cref{H1} is equivalent to the following proposition:
\begin{proposition}\label{prop:H1-optimization}
  Let $q=p$, $d\geq 0$, and $k\geq 0$ be fixed.
  Among all admissible decompositions, there is a unique one minimizing $\Phi$
  as defined in \eqref{eq:Phi-H1}.
\end{proposition}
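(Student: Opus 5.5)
The plan is to reduce \Cref{prop:H1-optimization} to a stone-placement problem on the grid of \Cref{fig:addition_table}, and then prove by exchange arguments that one specific greedy placement is the unique optimum. The basic observation is that a stone in row $i$ of $\Col_m$ (so in $\Diag_{m-i}$) contributes $q^{m}g_i$ to $\Phi$. We have
\[
  g_i-g_{i+1}=(q-1)\bigl(q^{d-i}-1\bigr)>0 \qquad (0\le i<d),
\]
so the $g_i$ strictly decrease. Each stone therefore prefers the deepest row available to it. The competition comes only from the diagonal capacities \eqref{eq:source-capacity-H1}, together with the geometric constraint $i\le m$.

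\textbf{Step 1 (carry elimination).} First I would show that an admissible decomposition whose column sums involve a base-$q$ carry is never optimal. Suppose some column $\Col_m$ holds at least $q$ stones. Remove $q$ of them, and let $i$ be the row of one removed stone. Add a single stone in row $i+1$ of $\Col_{m+1}$, which lies in the freed diagonal $\Diag_{m-i}$. This preserves $k$ and does not violate any diagonal capacity. It also strictly lowers $\Phi$, since
\[
  q^{m+1}g_{i+1}<q^{m+1}g_i\le\sum(\text{costs of the removed stones}).
\]
The delicate subcase is when every removed stone sits in row $d$. Then I would instead place the new stone in row $d$ of $\Col_{m+1}$ and repair the diagonal count by shifting one stone down a diagonal. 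Iterating removes all carries, so after this step each $\Col_m$ holds exactly $C_m$ stones.

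\textbf{Step 2 (exchange inequality).} After Step 1 the problem is a transportation problem. For each $m$ we distribute $C_m$ stones among rows $0\le i\le\min(m,d)$, each diagonal has capacity $q-1$, and the cost of a stone is $q^m g_i$. The key computation is a two-stone swap.

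Take a stone in $\Col_m$ at row $i$ and a stone in $\Col_{m'}$ at row $i''$, with $m'<m$. Suppose we move the first stone down by $\delta$ rows to row $i'=i+\delta$, which puts it in the diagonal currently used by the second stone. Compensate by moving the second stone up by $\delta$ rows, into the diagonal just freed. Set $s=m-m'$. Expanding $g$, the exponential parts of the two cost changes cancel exactly, because both equal $q^{m+d+1-i}-q^{m+d+1-i'}$. What survives is the linear part: the net change in $\Phi$ is $\pm(q-1)\delta\,(q^{m}-q^{m'})$, which is nonzero. So every such swap strictly changes $\Phi$, and the sign tells us which of the two columns should get the deeper diagonal.

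My computation suggests the sign favours giving the deeper rows on a contested diagonal to the \emph{lower} column. I would double-check this sign carefully before fixing the direction of the greedy order.

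\textbf{Step 3 (greedy description and uniqueness).} Define the greedy placement as follows. Process stones column by column, in the order dictated by Step 2. Put each stone into the deepest row $i\le\min(m,d)$ whose diagonal $\Diag_{m-i}$ still has spare capacity.

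Given any admissible carry-free placement that differs from the greedy one, look at the first stone (in processing order) where they disagree. Either the placement leaves that stone in a shallower row although a deeper diagonal was free at its turn, or that deeper diagonal is later filled by a stone of lower priority. In the first case, moving the stone down strictly decreases $\Phi$. In the second case, the Step 2 swap strictly decreases $\Phi$. Hence every non-greedy placement is strictly beaten, and the greedy placement is the unique minimizer.

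I expect the main obstacle to be the bookkeeping in Step 2 and Step 3. One must check that the compensating move keeps the second stone in a legal cell, namely $i''-\delta\ge 0$ and row at most column index. One must also check that the swap respects every diagonal capacity, and handle chains in which the freed diagonal is itself contested. The row-$d$ boundary case of Step 1 needs similar care.
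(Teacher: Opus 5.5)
\textbf{There is a genuine gap in Step 1.} Steps 2 and 3 follow the paper's argument and are sound. Your displayed chain needs $qg_i\le\sum_r g_r$, summed over the rows $r$ of the $q$ removed stones, and this holds only if every removed stone lies in a row $\le i$.

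The subcase you flag as delicate, ``every removed stone sits in row $d$'', cannot occur: row $d$ of $\Col_m$ is the single digit cell $a_{d,m-d}\le q-1$. The case that actually needs work is the mixed one, where some removed stones are in row $d$ and the rest are shallower. This can be forced, for example when $\Col_m$ holds exactly $q-1$ stones in row $d$ and one in row $0$. There you cannot use row $d+1$. If you take $i<d$, your inequality is false: for $q=2$, $d=2$ one has $g_0=8$, $g_1=5$, $g_2=4$, and $qg_0=16>12=g_0+g_2$.

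Your row-$d$ repair does not rescue this as stated. Take $q=2$, $d=2$, with stones in rows $0$ and $2$ of $\Col_m$ and one stone in row $2$ of $\Col_{m+1}$. The diagonal $\Diag_{m-1}$ that receives your new stone is already full, and its only stone sits in row $d$, so there is nothing to shift down.

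\textbf{The missing ingredient is the paper's sharper estimate.} Let $j$ be the largest row $<d$ containing a removed stone, and put the new stone in row $j+1$ of $\Col_{m+1}$, which lies in the freed diagonal $\Diag_{m-j}$. Every removed stone lies either in a row $\le j$ or in row $d$, and at most $q-1$ of them are in row $d$. Hence $\sum_r g_r\ge g_j+(q-1)g_d$, and
\[ g_j+(q-1)g_d-qg_{j+1}=(q-1)^2(d-j)>0. \]
This identity depends on the exact shape $g_i=q^{d+1-i}+(q-1)i$; it does not follow from $g_0>\cdots>g_d$ alone. In the example above it gives a decrease of $2\cdot 2^m$.

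With Step 1 repaired this way, the rest of your plan coincides with the paper's:
\begin{itemize}
\item Your tentative sign is correct: on a contested diagonal the lower-indexed column gets the deeper row, so the greedy runs through $\Col_0,\Col_1,\dots$.
\item The compensating move in Step 3 is automatically legal. The later stone at row $j+t$ of $\Col_{m+t}$ moves up to row $i+t$, where $i<j$ is the row vacated in $\Col_m$, and $0\le i+t<j+t\le d$.
\item No chains arise, because the two-stone swap removes and adds exactly one stone in each of the two diagonals involved.
\end{itemize}
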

In words, the objective function $\Phi$
is obtained by assigning a weight $q^{n+i} g_i$ to each stone
in the cell labeled $a_{i,n}$ and summing.
We remark that, since $g_i-g_{i+1}=(q-1)(q^{d-i}-1)>0$, the $g_i$ are decreasing:
\begin{equation}\label{eq:g-decreasing-H1}
  g_0>g_1>\cdots>g_d.
\end{equation}

The proof has two parts.
First we show that a minimizer has no carries in the addition table in \Cref{fig:addition_table}.
Then the problem becomes a transparent assignment problem for the individual base-$q$ digit units of $k$.

\subsection{No shifted carries in \Cref{fig:addition_table}}
The next lemma is the first decisive simplification.
\begin{lemma}\label{lem:H1-carry-elimination}
  If $(k_0,\ldots,k_d)$ is an admissible decomposition minimizing $\Phi$, then
  \[ \sum a_{i,m-i} \leq q-1\qquad\text{for every }m.  \]
  Equivalently, $\Col_m$ of \Cref{fig:addition_table} has exactly $C_m$ stones.
\end{lemma}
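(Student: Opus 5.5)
The plan is a direct exchange argument: assume a minimizer has some column with at least $q$ stones, and construct another admissible decomposition of the same $k$ with strictly smaller $\Phi$. Throughout, a stone in cell $a_{i,n}$ sits in $\Col_{n+i}$ and $\Diag_n$, contributes $q^{n+i}$ to $\sum_i q^ik_i$, and contributes $q^{n+i}g_i$ to $\Phi$.

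\textbf{Setup.} Suppose $\sum_i a_{i,m-i}\geq q$ for some $m$. Choose any $q$ stones in $\Col_m$. Not all of them can lie in row $d$, because that cell holds only the single digit $a_{d,m-d}\leq q-1$. So at least one chosen stone lies in some row $i<d$, at cell $(i,m-i)$.

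\textbf{The move.} Delete the $q$ chosen stones and add one stone at cell $(i+1,m-i)$, which lies in $\Col_{m+1}$ and in the same diagonal $\Diag_{m-i}$ as the removed row-$i$ stone. The effect on the data is as follows.
\begin{itemize}
\item The value $\sum q^jk_j$ is unchanged, since $q\cdot q^m=q^{m+1}$. Hence we still have a decomposition of $k$, with all $k_j\geq 0$.
\item Every diagonal total weakly decreases. In particular, $\Diag_{m-i}$ loses at least the row-$i$ stone and gains exactly one. Thus \eqref{eq:source-capacity-H1} still holds.
\item Every entry still satisfies $a_{j,n}\leq q-1$, since each entry is bounded by its diagonal total. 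The new tuple is therefore again a genuine admissible digit configuration.
\end{itemize}

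\textbf{The weight comparison.} This is the only real computation, and I expect it to be the crux. The removed stones have total weight at least $q^m\bigl(g_i+(q-1)g_d\bigr)$, because $g$ is decreasing by \eqref{eq:g-decreasing-H1}. The added stone has weight $q^{m+1}g_{i+1}$. So it suffices to show
\[
q\,g_{i+1}<g_i+(q-1)g_d .
\]
Substituting $g_j=q^{d+1-j}+(q-1)j$, the terms $q^{d+1-i}$ cancel on both sides. The inequality then reduces to
\[
q(q-1)(i+1)<(q-1)\bigl(i+q+(q-1)d\bigr),
\]
which is equivalent to $(q-1)i<(q-1)d$, i.e.\ to $i<d$. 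This is exactly why we insisted on choosing a stone outside row $d$. Hence $\Phi$ strictly decreases, contradicting minimality.

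\textbf{Conclusion.} Therefore every column of a minimizer has at most $q-1$ stones. Then the addition in \Cref{fig:addition_table} involves no carries, so by uniqueness of base-$q$ expansions $\Col_m$ contains exactly $C_m$ stones. This gives the stated equivalence.
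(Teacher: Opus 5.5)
Your proposal is correct and is essentially the paper's argument: remove $q$ stones from an overfull $\Col_m$, add one stone at the cell $(i+1,m-i)$ on the diagonal of a removed stone lying in a row $i<d$, and conclude from $g_i+(q-1)g_d-qg_{i+1}=(q-1)^2(d-i)>0$. The only differences are minor: the paper takes $i$ to be the largest such occupied row, which your bound shows is not needed, and you explicitly check that the new digit stays at most $q-1$, which the paper leaves implicit.
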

\begin{proof}
Assume for contradiction that a minimizing admissible decomposition has a carry,
first occurring in $\Col_m$.
We perform the following change:
\begin{itemize}
  \item Choose any $q$ stones in $\Col_m$ and remove them.
  \item Among the rows occupied by these chosen stones,
    let $j$ be maximal subject to $j<d$
    (since $a_{d,m-d} < q$, not all stones are in the final row).
    Add one stone to $a_{j+1,m-j}$.
\end{itemize}
The addition table in \Cref{fig:addition_table} remains valid,
since we removed a total of $q$ stones from $\Col_m$ and added one to $\Col_{m+1}$.
Moreover, the newly added stone was in $\Diag_{m-j}$,
which had at least one stone removed;
hence the diagonal capacity constraints are still satisfied too.

It remains to compare the values of $\Phi$.
The removed stones contribute
\[
  q^m\sum_{\text{chosen}}g_i,
\]
while the inserted stone contributes
\[
  q^{m+1}g_{j+1}=q^m qg_{j+1}.
\]
Hence the replacement lowers $\Phi$ if
\begin{equation}\label{eq:H1-Phi-ineq}
  \sum_{\text{chosen}}g_i > qg_{j+1}.
\end{equation}
Recall from \eqref{eq:g-decreasing-H1} that $g_i$ are decreasing.
By the definition of $j$, all stones appear in either row $d$ or rows $\le j$.
Row $d$ contains $a_{d,m-d} \le q-1$ stones, so
\[ \sum_{\text{chosen}}g_i \ge  g_j+(q-1)g_d.  \]
A direct calculation gives
\begin{align*}
  g_j+(q-1)g_d-qg_{j+1}
  &=\bigl(q^{d+1-j}+(q-1)j\bigr)
    +(q-1)\bigl(q+(q-1)d\bigr)\\
  &\qquad -q\bigl(q^{d-j}+(q-1)(j+1)\bigr)\\
  &=(q-1)^2(d-j)>0.
\end{align*}
Thus the replacement strictly lowers $\Phi$, contradicting minimality.
Therefore no shifted carry can occur in a minimizer.
\end{proof}

\subsection{The assignment problem and its greedy solution}
Thanks to \Cref{lem:H1-carry-elimination},
we can now regard \Cref{fig:addition_table} as a totally combinatorial problem
where $\Col_m$ has exactly $C_m$ stones
and $\Diag_n$ has at most $q-1$ stones,
and we wish to minimize a sum of weights across all the stones.

We can now describe the greedy algorithm concretely and simply:
\begin{algorithm}
  \label{alg:H1-greedy}
  Allocate the stones of $\Col_0$, $\Col_1$, \dots, in that order.
  For the $m$\ts{th} column ($m \ge 0$), we place each of the $C_m$ stones in $\Col_m$
  (one at a time) in the lowest cell that stone can be placed
  in without violating diagonal capacity constraints.
\end{algorithm}

\begin{example}
  Let $q = 11$, $d = 3$, and $k = 8675309_{11}$ (in base eleven).
  Then the diagonal constraint is that each diagonal has at most $10$ stones in it.
  The corresponding greedy output is:
  \[
    \begin{array}{c|ccc cccc}
      & \Col_6 & \Col_5 & \Col_4 & \Col_3 & \Col_2 & \Col_1 & \Col_0 \\ \hline
      \text{Row $0$} & 0 & 0 & 0 & 0 & 0 & 0 & 9 \\
      \text{Row $1$} & 0 & 0 & 0 & 0 & 2 & 0 \\
      \text{Row $2$} & 0 & 0 & 4 & 5 & 1 \\
      \text{Row $3$} & 8 & 6 & 3 & 0 \\ \hline
      k & 8 & 6 & 7 & 5 & 3 & 0 & 9
    \end{array}
  \]
\end{example}

\begin{lemma}
  The greedy assignment described in \Cref{alg:H1-greedy} minimizes $\Phi$.
\end{lemma}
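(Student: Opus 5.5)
The plan is an exchange argument. By \Cref{lem:H1-carry-elimination}, we may restrict attention to fillings of \Cref{fig:addition_table} in which $\Col_m$ contains exactly $C_m$ stones and each $\Diag_n$ contains at most $q-1$ stones. There are only finitely many such fillings, so a minimizer of $\Phi$ exists.

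Among all minimizers, I would choose one, call it $\mathcal O$, that agrees with the greedy output $\mathcal G$ of \Cref{alg:H1-greedy} for as long as possible, in the order in which the greedy algorithm places stones. Let the first discrepancy occur when the greedy algorithm places a stone of $\Col_m$ in row $i'$. Since the earlier placements coincide, and since greedy always takes the lowest admissible cell, $\mathcal O$ must place this stone in some row $i<i'$. Set $\delta=i'-i>0$. The cell in row $i'$ of $\Col_m$ lies on $\Diag_{m-i'}$, and greedy found room on that diagonal given the stones already placed. The stone of $\mathcal O$ sits on $\Diag_{m-i}$.

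There are two cases.

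\emph{Case 1: $\Diag_{m-i'}$ is not full in $\mathcal O$.} Move the stone from row $i$ to row $i'$ within $\Col_m$. All capacities are preserved, and $\Phi$ changes by $q^m(g_{i'}-g_i)<0$ by \eqref{eq:g-decreasing-H1}. This contradicts minimality.

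\emph{Case 2: $\Diag_{m-i'}$ is full in $\mathcal O$.} Because greedy had room on this diagonal at this moment, and the earlier placements agree, $\mathcal O$ must have a stone on $\Diag_{m-i'}$ in some later column $m'>m$. Its row is $i''=m'-m+i'$. I then perform a double move:
\begin{itemize}
\item the $\Col_m$ stone goes from row $i$ to row $i'$;
\item the $\Col_{m'}$ stone goes from row $i''$ to row $i''-\delta=m'-m+i$.
\end{itemize}
The new row satisfies $i<i''-\delta<i''\le d$, so it is a valid cell, and it lies on $\Diag_{m-i}$.

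Column counts are unchanged. $\Diag_{m-i'}$ loses one stone and gains one. $\Diag_{m-i}$ loses one stone and gains one. So admissibility is preserved.

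The key step, and the one I expect to be the main obstacle, is the sign of the change in $\Phi$. To first order the two moves cancel, so the comparison must be done exactly using $g_i=q^{d+1-i}+(q-1)i$. The gain from the first move is
\[
q^m(g_i-g_{i'})=q^{m+d+1-i}-q^{m+d+1-i'}-(q-1)\delta q^m .
\]
The loss from the second move is
\[
q^{m'}(g_{i''-\delta}-g_{i''})=q^{m+d+1-i}-q^{m+d+1-i'}-(q-1)\delta q^{m'} .
\]
Here I have used $m'-i''=m-i'$ to simplify the exponents. The leading terms are identical, so the net change in $\Phi$ is
\[
(q-1)\delta\,(q^m-q^{m'})<0 .
\]
This again contradicts minimality.

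Both cases are impossible, so $\mathcal O$ agrees with $\mathcal G$ at every step. Hence $\mathcal G$ is a minimizer.

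The same argument yields uniqueness for free. Every deviation from greedy was shown to be strictly improvable, so any minimizer must coincide with $\mathcal G$. Combined with \Cref{lem:H1-carry-elimination}, this gives \Cref{prop:H1-optimization}.

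One subtlety needs care in writing. Within a single column, several stones may be placed in the same cell, so "the first discrepancy" should be defined via the per-column multiset of rows, compared from the bottom row upward. The inference that $\mathcal O$ has a stone on $\Diag_{m-i'}$ in a later column must then use only the stones of columns $\le m$ already matched with greedy.
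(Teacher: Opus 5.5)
Your proof is correct and is essentially the paper's argument. Your Case 1 is the paper's single downward move, and your Case 2 double move is exactly the paper's parallelogram exchange (with $t=m'-m$ and $j=i'$), giving the same change $(q-1)\delta(q^m-q^{m'})<0$. The only difference is bookkeeping: you locate the discrepancy in greedy's placement order, whereas the paper uses the rightmost differing column and the largest row where the minimizer has fewer stones. Also, since every deviation is strictly improvable, choosing a minimizer that agrees with greedy for as long as possible is unnecessary, and uniqueness follows directly.
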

\begin{proof}
  Let $a_{i,n}$ be the allocation minimizing $\Phi$.
  assume for contradiction it doesn't match the greedy allocation,
  and let $\Col_m$ be the rightmost column where they don't match.
  Choose the largest $j$ such that $a_{j,m-j}$ has fewer stones than the greedy allocation;
  then there exists some $i < j$ such that $a_{i,m-i} > 0$.
  We consider two cases:
  \begin{itemize}
  \item Suppose $\Diag_{m-j}$ is not at full capacity.
  Then we are free to move a stone directly downwards from $a_{i,m-i}$ to $a_{j,m-j}$.
  This changes the value of $\Phi$ by
  \[ q^{m} (g_j - g_i) < 0 \]
  due to \eqref{eq:g-decreasing-H1}, contradicting minimality.

  \item Suppose $\Diag_{m-j}$ is at full capacity already.
  This implies the existence of a $t > 0$ such that $a_{j+t,m-j} > 0$.
  Thus, we arrive at a picture like in \Cref{fig:parallelogram}.

  \begin{figure}[b]
    \begin{tikzpicture}[scale=2]
      \node (A) at (1,2)  {$\boxed{a_{i,m-i} > 0}$};
      \node (B) at (0,1)  {$a_{i+t, m-i}$};
      \node (C) at (0,0)  {$\boxed{a_{j+t, m-j} > 0}$};
      \node (D) at (1,1)  {$a_{j,m-j}$};
      \draw (A) -- (B) -- (C) -- (D) -- (A);
    \end{tikzpicture}
    \caption{An impossible parallelogram that we show can't appear in an optimal solution.}
    \label{fig:parallelogram}
  \end{figure}

  However, we prove that \Cref{fig:parallelogram} violates minimality by moving two stones:
  we remove one stone from each of $a_{i,m-i}$ and $a_{j+t,m-j}$
  and add one stone to $a_{i+t,m-i}$ and $a_{j,m-j}$ instead.
  This has no effect on column or diagonal sums, but it changes the value of $\Phi$ by
  \begin{align*}
    q^{m} (g_j - g_i) - q^{m+t} (g_{j+t}-g_{i+t})
    &= q^m \left[ (q^{d+1-j} - q^{d+1-i}) + (q-1)(j-i) \right] \\
    &\qquad - q^{m+t} \left[ (q^{d+1-(j+t)} - q^{d+1-(i+t)}) + (q-1)(j-i) \right] \\
    &= (q-1)(j-i)(q^m-q^{m+t}) < 0. \qedhere
  \end{align*}
  \end{itemize}
\end{proof}

The proof of \Cref{H1} is now just a translation from the minimization of $\Phi$ back
to the degree of the corresponding term in Carlitz's expansion.
\begin{proof}[Proof of \Cref{thm:intro-H1}]
The nonzero summands in Carlitz's expansion are exactly the admissible decompositions.
By \Cref{prop:H1-optimization},
the degree contribution has a unique maximum among those decompositions.
This is precisely \Cref{H1}.
\end{proof}

\section{The common expansion of $S_d(k)$ in the proofs of H2 and H3}
\label{sec:H2H3}

The proof (and indeed even the statement) of H1 was based on
Carlitz's expansion \eqref{eq:carlitz-full}.
In contrast, the proofs of H2 and H3 will not use \eqref{eq:carlitz-full}
but instead a more direct expansion that we describe now in \Cref{lem:multinomial_expansion}.
Because this section is used in H3, we will work in the generality of $q = p^f$
without assuming that $f = 1$.

\subsection{The expansion}
To state \Cref{lem:multinomial_expansion}, we need the following definition:
\begin{definition}
  For $d \ge 1$ and $k > 0$, define $\Tdk$ to be the set of $d$-tuples
  $\mathbf{m} = (m_1,\ldots,m_d)$ such that
  \[
    m_i>0\qquad {\text {\rm and}}\qquad (q-1)\mid m_i
    \quad(1\leq i\leq d),
  \]
  and
  \[
    (k-1) \oplus m_1\oplus\cdots\oplus m_d
  \]
  is carry-free in base-$p$.

  Note the set $\Tdk$ is always nonempty.
  (Indeed, choose $d$ disjoint sufficiently high blocks of $f$ consecutive base-$p$ digits,
  far beyond the support of $k-1$, and place a copy of $q-1=p^f-1$ shifted into each block.)
\end{definition}

We show that the set $\Tdk$ records the possible exponent patterns that survive
both Lucas' theorem and the finite-field summation.
\begin{lemma}
  [Multinomial expansion of $S_d(k)$]
  \label{lem:multinomial_expansion}
  Fix $k > 0$ and $d \ge 1$.
  Then
  \[ S_d(k) = t^{-dk} \sum_{\mathbf{m} \in \Tdk}
    c_{\mathbf{m}} t^{-(m_1+2m_2+\dots+dm_d)} \]
  for some nonzero scalars $c_{\mathbf{m}} \in \F_q^\times$.
\end{lemma}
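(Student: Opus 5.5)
We expand each monic $a\in A_d^+$ as $a=t^d+x_{d-1}t^{d-1}+\cdots+x_0$ with $x_j\in\F_q$ and work in $\F_q((1/t))$. Write $a=t^d(1+u)$ with
\[
  u \coloneq \sum_{i=1}^d y_i t^{-i},\qquad y_i\coloneq x_{d-i}\in\F_q .
\]
Then $a^{-k}=t^{-dk}(1+u)^{-k}$, and the binomial series gives
\[
  (1+u)^{-k}=\sum_{N\ge0}\binom{-k}{N}u^N=\sum_{N\ge0}(-1)^N\binom{k+N-1}{N}u^N .
\]
We expand $u^N$ by the multinomial theorem as a sum over $(m_1,\ldots,m_d)$ with $\sum m_i=N$ of $\binom{N}{m_1,\ldots,m_d}\prod_i y_i^{m_i}t^{-i m_i}$. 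Since everything is a formal power series in $t^{-1}$ with only finitely many contributions to each power, we may interchange the sum over $a$, i.e.\ over $(y_1,\ldots,y_d)\in\F_q^d$, with these expansions.

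Summing over $y\in\F_q^d$, the sum factors as $\prod_i\sum_{y_i\in\F_q}y_i^{m_i}$. By \Cref{lem:field-sums} this is $(-1)^d$ when every $m_i>0$ with $(q-1)\mid m_i$, and $0$ otherwise. Each surviving $\mathbf m$ therefore contributes
\[
  t^{-dk}\,(-1)^{N+d}\binom{k-1+N}{k-1}\binom{N}{m_1,\ldots,m_d}\,t^{-(m_1+2m_2+\cdots+dm_d)} .
\]
The two coefficients combine into the single multinomial $\binom{k-1+N}{k-1,m_1,\ldots,m_d}$. By \Cref{lem:mult-Lucas} this is nonzero mod $p$ exactly when $(k-1)\oplus m_1\oplus\cdots\oplus m_d$ is carry-free. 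So the surviving terms are indexed by $\mathbf m\in\Tdk$, with coefficient
\[
  c_{\mathbf m}=(-1)^{N+d}\binom{k-1+N}{k-1,m_1,\ldots,m_d}\in\F_p^\times\subset\F_q^\times .
\]
Distinct $\mathbf m$ may give the same exponent; this is harmless, since the lemma only asserts a sum indexed by $\Tdk$ with nonzero scalars, not distinct powers.

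The main obstacle is justifying the manipulations: the expansion of $(1+u)^{-k}$ in $\F_q[[t^{-1}]]$, and the exchange of the finite sum over $a$ with the infinite series. I would handle this by noting that, since $u$ has positive $t^{-1}$-valuation, each coefficient of $t^{-n}$ involves only the finitely many $\mathbf m$ with $\sum i m_i=n$. The sum over $\mathbf m\in\Tdk$ is thus coefficientwise finite and converges $1/t$-adically.
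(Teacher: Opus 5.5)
Your proof is correct and follows the paper's argument: expand $(1+u)^{-k}$ binomially and $u^N$ multinomially, kill terms with \Cref{lem:field-sums}, and identify the survivors via Lucas' theorem. The only difference is that you merge $\binom{k-1+N}{k-1}\binom{N}{m_1,\ldots,m_d}$ into the single multinomial $\binom{k-1+N}{k-1,m_1,\ldots,m_d}$ and apply \Cref{lem:mult-Lucas} once, which makes the paper's step of combining two separate carry-free conditions immediate; you also justify the convergence and the interchange of sums, which the paper leaves implicit.
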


\begin{proof}
  Write a monic polynomial of degree $d$ as
  \[
    a=t^d+\theta_1t^{d-1}+\cdots+\theta_d\qquad \text{and} \qquad \theta_i\in\F_q.
  \]
  Then
  \[
    a^{-k}=t^{-dk}(1+\theta_1t^{-1}+\cdots+\theta_dt^{-d})^{-k}.
  \]
  Expanding the last factor gives
  \begin{align*}
  S_d(k)
  &=t^{-dk}
    \sum_{\theta_1,\ldots,\theta_d\in\F_q}
    \sum_{m_1,\ldots,m_d\geq 0}
    \binom{-k}{m_1+\cdots+m_d}
    \binom{m_1+\cdots+m_d}{m_1,\ldots,m_d}\\
  &\qquad\qquad\qquad\cdot
    \theta_1^{m_1}\cdots\theta_d^{m_d}
    t^{-(m_1+2m_2+\cdots+dm_d)}.
  \end{align*}
  By \Cref{lem:field-sums}, summing over $\theta_i\in\F_q$ kills every term except those with
  \[
    m_i>0\qquad {\text {\rm and}}\qquad (q-1)\mid m_i.
  \]
  Next we turn attention to the binomial and multinomial coefficient.
  Put $y \coloneq m_1 + \dots + m_d$.
  The identity
  \[
    \binom{-k}{y}=(-1)^y\binom{k+y-1}{y}
  \]
  shows, by \Cref{lem:Lucas-binomial},
  that $\binom{-k}{y}$ is nonzero modulo $p$ if and only if $(k-1)+y$ is carry-free.
  By \Cref{lem:mult-Lucas},
  the multinomial coefficient is nonzero modulo $p$ if and only if $m_1+\cdots+m_d=y$ is carry-free.
  These two carry-free conditions hold simultaneously if and only if the combined addition
  \[
    (k-1)\oplus m_1\oplus\cdots\oplus m_d
  \]
  is carry-free, i.e., if $(m_1, \dots, m_d) \in \Tdk$.
\end{proof}

From \Cref{lem:multinomial_expansion} it is already clear that
\begin{equation}
  s_d(k) \ge dk + \min_{\mathbf{m} \in \Tdk} (m_1 + 2m_2 + \dots + dm_d).
  \label{eq:sdk_min}
\end{equation}

However, we are going to use Sheats' theorem to prove that in fact
the minimizing tuple $\mathbf{m}$ is unique;
this means equality must hold in \eqref{eq:sdk_min}.

\subsection{Sheats' uniqueness theorem}
For $m\geq 1$ and $N>0$, let $U_m(N)$ be the set of tuples
\[
  X=(X_1,\ldots,X_m)\in\Z_{\geq 0}^m
\]
such that
\[
  N=X_1\oplus X_2\oplus\cdots\oplus X_m
\]
is carry-free in base-$p$, and
\[
  X_j>0\qquad {\text {\rm and}}\qquad (q-1)\mid X_j
  \qquad(1\leq j\leq m-1).
\]
There is no positivity or divisibility condition on $X_m$.  Define
\[
  \wt(X) \coloneq X_1+2X_2+\cdots+mX_m.
\]
We use Sheats' theorem in the following form.
It provides the uniqueness input needed to prevent cancellation at the minimal
exponent in the positive-power expansion.

\begin{theorem}[{Sheats, \cite[Lemma~1.3]{Sheats1998}}]\label{thm:H3-Sheats}
If $U_m(N)$ is nonempty, then $U_m(N)$ contains a unique element maximizing $\wt(X)$.
\end{theorem}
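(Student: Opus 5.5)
The plan is to rewrite $\wt$ as a sum of partial sums and then minimize each partial sum separately. Set $P_h \coloneq X_1+\cdots+X_h$ for $0\le h\le m-1$. Summation by parts gives
\[
  \wt(X)=\sum_{h=1}^{m}(X_h+\cdots+X_m)=mN-\sum_{h=1}^{m-1}P_h .
\]
So maximizing $\wt$ is the same as minimizing $\sum_{h} P_h$.

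I will think of $X$ as an assignment of the base-$p$ digit units of $N$ to the $m$ slots. There are $N_e$ units of size $p^e$, where $N_e$ is the $e$-th base-$p$ digit of $N$. Carry-freeness says exactly that every unit goes to exactly one slot, and then $P_h$ is carry-free digit-contained in $N$.

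The divisibility condition $(q-1)\mid X_j$ is a condition on the units in slot $j$. Since $p^e\equiv p^{e \bmod f}\pmod{q-1}$, it says that the "reduced weights" $p^{e\bmod f}$ of those units sum to a positive multiple of $q-1$. Thus $P_h$ is feasible for a given $h$ exactly when its unit set can be split into $h$ nonempty groups, each with reduced weight divisible by $q-1$, and the complement can still be split into $m-1-h$ such groups plus the unconstrained remainder $X_m$.

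The core step is to show the constraints are \emph{nested}. Let $\mu_h$ be the minimum of $P_h$ over all feasible $X$. I want to show:
\begin{enumerate}
\item the minimum $\mu_h$ is attained by a unique digit-sub-vector $D_h\preceq N$;
\item $D_1\preceq D_2\preceq\cdots\preceq D_{m-1}$ digitwise;
\item each difference $D_h-D_{h-1}$ is a positive multiple of $q-1$.
\end{enumerate}
Once these hold, $X_h\coloneq D_h-D_{h-1}$ (with $X_m\coloneq N-D_{m-1}$) lies in $U_m(N)$. It minimizes every $P_h$ simultaneously, so it maximizes $\wt$. Any other maximizer must also attain every $\mu_h$, so it has the same $P_h$ and hence equals $X$. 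This gives both existence and uniqueness.

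To get the structure of $D_h$, I would describe the minimal feasible $P_h$ greedily. The idea is to use the lowest available units of $N$, but only in the amounts forced by the divisibility condition: a block of units whose reduced weights total $q-1$ costs as little as possible when it is built from the lowest positions. An exchange argument should make this precise. If a candidate $P_h$ uses a unit $p^{e'}$ while leaving unused a unit $p^{e}$ with $e<e'$ and the same residue $e\equiv e'\pmod f$, swap them; this strictly lowers $P_h$. Swaps across residue classes that preserve reduced weight modulo $q-1$ are handled the same way, by the strict inequality $p^{e}<p^{e'}$. This is the analogue of the rearrangement and extraction arguments in \Cref{lem:H2-block-min,lem:H2-extraction}, with stones replaced by digit units.

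The main obstacle is the nesting claim (2) together with uniqueness, especially when $f>1$. There a unit of reduced weight $p^r$ can be traded against $p$ units of reduced weight $p^{r-1}$, so the cheapest way to reach a multiple of $q-1$ is not obviously monotone in $h$. My first attempt would be induction on $m$. Remove the lowest valid $(q-1)$-block as $X_1=D_1$ and apply the induction hypothesis to $N-D_1$ with $m-1$ slots. The work is in the extraction-type inequality: any feasible $X$ with $X_1\ne D_1$ must have strictly smaller $\wt$. For this I would compare $X$ layer by layer, $E_h=\{$units in slots $\le h\}$, against the first $h$ greedy blocks, as in the proof of \Cref{lem:H2-extraction}, and prove that $\mu_h$ is computed by $h$ successive greedy extractions. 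If that comparison fails, I would fall back on Sheats' original digit-by-digit induction on the number of base-$p$ digits of $N$, processing positions from the top.
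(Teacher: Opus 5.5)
\textbf{Verdict: there is a genuine gap.} The nesting claim (2) is false when $f>1$, so the plan cannot work for non-prime $q$.

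Your identity $\wt(X)=mN-\sum_{h=1}^{m-1}P_h$ is correct. Claims (1) and (3) are essentially free: a minimal value $\mu_h$ determines its own digits. Once $D_{h-1}\preceq D_h$, the difference of two multiples of $q-1$ with $\mu_h>\mu_{h-1}$ is a positive multiple of $q-1$. So everything rests on (2), and for $f>1$ it fails, as does your fallback induction that fixes $X_1=D_1$.

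Take $q=4$ (so $p=2$, $q-1=3$), $m=3$, and
\[
  N=2^0+2^2+2^4+2^5+2^7+2^9=693 .
\]
A set of bits of $N$ sums to a multiple of $3$ iff it has as many even as odd positions modulo $3$.

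The only nonzero multiple of $3$ below $33$ built from bits of $N$ is $21=2^0+2^2+2^4$. Also $(21,672,0)\in U_3(N)$, since $672=2^5+2^7+2^9$. Hence $\mu_1=21$.

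Two disjoint nonempty groups must together have (even, odd) type $(2,2)$ or $(3,3)$. Hence $\mu_2=2^0+2^2+2^5+2^7=165$. Bit $4$ lies in $D_1$ but not in $D_2$, so $D_1\not\preceq D_2$.

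Minimizing $2X_1+X_2$ directly, the unique maximizer of $\wt$ is $(33,132,528)$, with $\wt=1881$. The only element with $X_1=21$ is $(21,672,0)$, with $\wt=1365$. So:
\begin{itemize}
\item the maximizer does not minimize $P_1$;
\item no element minimizes all $P_h$ simultaneously;
\item the extraction inequality you hoped for (every $X$ with $X_1\neq D_1$ has strictly smaller weight) fails.
\end{itemize}
Building the first block from the lowest units is exactly the wrong move here.

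What is missing is control of the trade-off between different partial sums. Lowering $P_1$ from $33$ to $21$ forces $P_2$ up from $165$ to $693$. In this example the optimum comes from first minimizing the top partial sum $P_{m-1}$ (making $X_m$ as large as possible), and only then minimizing $P_1$ inside $D_2$. A correct proof must control how an increase in one partial sum can be traded against decreases in the others; that is where the real work in Sheats's lemma lies.

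The paper does not reprove this statement; it imports it from \cite[Lemma~1.3]{Sheats1998}. Your closing fallback to Sheats's digit-by-digit induction amounts to the same citation.

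For $f=1$ your plan does go through. Every unit is $\equiv 1\pmod{p-1}$, $D_h$ is the sum of the $h(p-1)$ smallest units of $N$, and these are nested. This is the block-minimization argument the paper itself points out in the remark after \Cref{thm:H2-slot-formula}. However, \Cref{lem:H3-minformula}, and through it \Cref{thm:intro-H3}, needs the statement for every $q=p^f$, and there your mechanism breaks.
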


\begin{remark}\label{rem:H3-Sheats-translation}
\cite[Lemma~1.3]{Sheats1998} also asserts that if $U_m(N)$ is nonempty,
the maximizing element is the so-called \emph{greedy element}.
In Sheats' terminology, this greedy element is obtained by making the successive
locally extremal choices among the remaining carry-free summands subject to the divisibility conditions.
The proof below uses only uniqueness of the maximizer, not the description of the maximizer as greedy.
\end{remark}

For our purposes it is more convenient to reverse the order of the components in Sheats' theorem.
The following proposition is only this reindexing, but it puts the statement in the form used later.

\begin{corollary}[Reindexed Sheats uniqueness]\label{cor:H3-sheats-reindexed}
Let $Y\geq 0$ and $d\geq 1$.  Consider decompositions
\begin{equation}\label{eq:H3-decomp-Y}
  Y=m_0\oplus m_1\oplus\cdots\oplus m_d
\end{equation}
with
\[
  m_0\geq 0,
  \qquad
  m_i>0,
  \qquad
  (q-1)\mid m_i
  \quad(1\leq i\leq d).
\]
If such decompositions exist, then among them the weighted sum
\[
  W(m_1,\ldots,m_d) \coloneq m_1+2m_2+\cdots+dm_d
\]
has a unique minimum.
\end{corollary}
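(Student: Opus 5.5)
We derive the statement from \Cref{thm:H3-Sheats} by reversing the order of the summands. We take $m \coloneq d+1$ and $N \coloneq Y$. To a decomposition \eqref{eq:H3-decomp-Y} we associate the tuple $X=(X_1,\ldots,X_{d+1})$ given by
\[
  X_j \coloneq m_{d+1-j}\qquad(1\leq j\leq d+1),
\]
so that $X_1=m_d,\ \ldots,\ X_d=m_1$ and $X_{d+1}=m_0$.

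First we check that this is a bijection between the decompositions allowed in the corollary and $U_{d+1}(Y)$. Carry-freeness of $m_0\oplus\cdots\oplus m_d$ does not depend on the order of the summands, because \Cref{def:carryfree} is symmetric in the summands. The conditions $m_i>0$ and $(q-1)\mid m_i$ for $1\leq i\leq d$ become exactly the conditions on $X_1,\ldots,X_{m-1}$. The unconstrained component $m_0\geq 0$ becomes the unconstrained last component $X_m$. Hence one set is nonempty if and only if the other is, and the reindexing is a bijection between them.

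Next we relate the two weight functions. Since $m_0=Y-\sum_{i=1}^d m_i$, we compute
\[
  \wt(X)=\sum_{j=1}^{d+1} j\,m_{d+1-j}=\sum_{i=0}^{d}(d+1-i)\,m_i
  =(d+1)\sum_{i=0}^d m_i-\sum_{i=0}^d i\,m_i=(d+1)Y-W(m_1,\ldots,m_d).
\]
The term $i=0$ contributes nothing to $\sum i\,m_i$. Because $Y$ and $d$ are fixed, $\wt$ is a constant minus $W$. Therefore maximizing $\wt$ over $U_{d+1}(Y)$ is the same as minimizing $W$ over the decompositions \eqref{eq:H3-decomp-Y}.

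Finally we transfer uniqueness. By \Cref{thm:H3-Sheats}, if $U_{d+1}(Y)$ is nonempty it has a unique $\wt$-maximizer. Under the bijection this gives a unique decomposition minimizing $W$.

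There is one subtlety to check: the corollary asks for uniqueness of the minimizing decomposition, equivalently of the tuple $(m_1,\ldots,m_d)$. This is the same thing, since $m_0=Y-\sum_{i\geq 1} m_i$ is determined by $(m_1,\ldots,m_d)$.

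A second point to verify is the hypothesis $N>0$ in \Cref{thm:H3-Sheats}. When $d\geq 1$ it holds automatically: any admissible decomposition has $m_1>0$, so $Y>0$. When $Y=0$ the set of decompositions is empty and the statement is vacuous.

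Beyond these two checks the argument is mechanical, and I expect no real obstacle.
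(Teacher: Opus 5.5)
Your proposal is correct and is essentially the paper's proof: the same reversal $X=(m_d,\ldots,m_1,m_0)$, the same bijection with $U_{d+1}(Y)$, and the same identity $\wt(X)=(d+1)Y-W(m_1,\ldots,m_d)$ to transfer Sheats' uniqueness. Your two extra checks are correct details that the paper leaves implicit: that $Y>0$ automatically (so the hypothesis $N>0$ holds), and that $m_0$ is determined by $(m_1,\ldots,m_d)$.
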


\begin{proof}
Associate to \eqref{eq:H3-decomp-Y} the $(d+1)$-tuple
\[
  X \coloneq (m_d,m_{d-1},\ldots,m_1,m_0).
\]
This gives a bijection between the decompositions \eqref{eq:H3-decomp-Y} and the set $U_{d+1}(Y)$.
Under this bijection,
\begin{align*}
  \wt(X)
  &=m_d+2m_{d-1}+\cdots+d m_1+(d+1)m_0.
\end{align*}
Since $Y=m_0+m_1+\cdots+m_d$, we have
\begin{align*}
  (d+1)Y-\wt(X)
  &=m_1+2m_2+\cdots+dm_d\\
  &=W(m_1,\ldots,m_d).
\end{align*}
Thus minimizing $W$ is the same as maximizing $\wt(X)$.
Uniqueness follows from \Cref{thm:H3-Sheats}.
\end{proof}

\subsection{The formula for $s_d(k)$ using $\Tdk$}
We can now prove equality always holds in \eqref{eq:sdk_min}.
This formula is a streamlined version of the positive-power minimization
implicit in Thakur's use of Sheats' theorem in \cite{Thakur2009};
the proof is included to make the dependence on Sheats' uniqueness explicit.
Sheats' uniqueness is used only at the end,
to ensure that the minimal exponent contributes a nonzero leading coefficient.

\begin{lemma}[Positive-power minimization formula]\label{lem:H3-minformula}
  For every $q=p^f$, $d\geq 1$, and $k>0$, define
  \[ M_d(k-1) \coloneq \min_{\mathbf{m} \in \Tdk} (m_1 + 2m_2 + \dots + dm_d). \]
  Then we have
  \begin{equation}\label{eq:H3-minformula}
    s_d(k) = dk + M_d(k-1).
  \end{equation}
\end{lemma}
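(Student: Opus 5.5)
The plan is to read the expansion of \Cref{lem:multinomial_expansion} as a Laurent series in $t^{-1}$ and show that its lowest-order term survives. By that lemma,
\[
  S_d(k)=t^{-dk}\sum_{\mathbf m\in\Tdk} c_{\mathbf m}\,t^{-W(\mathbf m)},
  \qquad W(\mathbf m)=m_1+2m_2+\cdots+dm_d,
\]
with every $c_{\mathbf m}\in\F_q^\times$. Since the $m_i$ are positive, $W(\mathbf m)\ge d(d+1)/2$, and for each fixed value $w$ only finitely many tuples satisfy $W(\mathbf m)=w$. So the sum makes sense in $\F_q((1/t))$. The coefficient of $t^{-dk-w}$ is $\sum_{W(\mathbf m)=w}c_{\mathbf m}$.

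The set $\Tdk$ is nonempty, and $W$ takes nonnegative integer values, so the minimum $M_d(k-1)$ is attained. The inequality $s_d(k)\ge dk+M_d(k-1)$ is exactly \eqref{eq:sdk_min}. For equality, it suffices to show that exactly one $\mathbf m\in\Tdk$ attains $W(\mathbf m)=M_d(k-1)$. Then the coefficient of $t^{-dk-M_d(k-1)}$ is a single nonzero scalar $c_{\mathbf m}$, hence nonzero, and $-\deg_t S_d(k)=dk+M_d(k-1)$.

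To get uniqueness, I would translate into \Cref{cor:H3-sheats-reindexed}. Suppose $\mathbf m$ and $\mathbf m'$ both attain the minimum. One cannot apply the corollary directly, because it fixes the total $Y$, whereas here $(k-1)\oplus m_1\oplus\cdots\oplus m_d$ has a varying total. The natural move is to set $Y=k-1+m_1+\cdots+m_d$ and $m_0=k-1$. Then elements of $\Tdk$ with a given total $Y$ are exactly the decompositions \eqref{eq:H3-decomp-Y} with $m_0=k-1$.

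The main obstacle is that the corollary minimizes over all $m_0$, and two minimizers might have different sums $\sum m_i$. I would handle this in two steps.

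First, I would show that any minimizer $\mathbf m$ of $W$ on $\Tdk$ is also the Sheats minimizer for its own total $Y$. Suppose instead that some decomposition $Y=m_0'\oplus m_1'\oplus\cdots\oplus m_d'$ has $W(\mathbf m')<W(\mathbf m)$. The digits of $m_0'$ then form a different subset of the digit-mass of $Y$ than $k-1$ does. I would try to exchange digits between $m_0'$ and the $m_i'$ to recover a tuple in $\Tdk$ of no larger weight. This step is delicate, and it is where the $(q-1)$-divisibility interacts with the digits.

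The cleaner route I would try first is to compare $\mathbf m$ and $\mathbf m'$ directly by rearranging digit blocks. Alternatively, I would invoke the form of Sheats' lemma with $m_0$ fixed: the greedy description in \Cref{rem:H3-Sheats-translation} lets one fix the last component. With that, uniqueness of the $W$-minimizer over $\Tdk$ follows, the leading coefficient is a single $c_{\mathbf m}\neq0$, and \eqref{eq:H3-minformula} holds.
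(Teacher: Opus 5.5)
Your reduction is correct and matches the paper. You read \Cref{lem:multinomial_expansion} in $\F_q((1/t))$, use that each weight class of $\Tdk$ is finite, and reduce to showing that exactly one $\mathbf m\in\Tdk$ attains $M_d(k-1)$. The gap is the uniqueness step itself.

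Setting $m_0=k-1$ and $Y=k-1+\sum_i m_i$ does not put you in the setting of \Cref{cor:H3-sheats-reindexed}. Your first repair also rests on a false claim: a $W$-minimizer on $\Tdk$ need not be the Sheats minimizer for its own total. Take $q=p=2$, $d=1$, $k=2$. The minimizer on $\mathcal T_{1,1}$ is $m_1=2$ (indeed $s_1(2)=4$), so $Y=3$. But $3=2\oplus 1$ with $m_1'=1$ has strictly smaller weight. No digit exchange can recover an element of $\mathcal T_{1,1}$ of weight $1$, because $1$ collides with the digit of $k-1$.

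Your second repair, a version of Sheats' lemma ``with $m_0$ fixed'' read off from the greedy description in \Cref{rem:H3-Sheats-translation}, is not what \cite[Lemma~1.3]{Sheats1998} provides: there the total is fixed and the last component is free. A fixed-$m_0$, varying-total uniqueness statement is precisely the lemma you are trying to prove. Establishing it by direct digit rearrangement for general $q=p^f$ amounts to redoing Sheats' argument.

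The missing idea is to fix the total as the base-$p$ complement of $k-1$ and let the unconstrained component absorb the slack.
\begin{itemize}
\item Choose $N$ so large that $k-1$ and the finitely many $\mathbf m\in\Tdk$ with $W(\mathbf m)\le M_d(k-1)$ have no nonzero base-$p$ digits in positions $\ge N$.
\item Set $Y=p^N-1-(k-1)$ and $m_0=Y-(m_1+\cdots+m_d)$.
\item Write $a_e$ and $m_{i,e}$ for the base-$p$ digits of $k-1$ and $m_i$. The condition $a_e+\sum_i m_{i,e}\le p-1$ is the same as $\sum_i m_{i,e}\le p-1-a_e$, and $p-1-a_e$ is the $e$th digit of $Y$. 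So $(k-1)\oplus m_1\oplus\cdots\oplus m_d$ is carry-free if and only if $Y=m_0\oplus m_1\oplus\cdots\oplus m_d$ is.
\item The component $m_0\ge 0$ carries no positivity or divisibility condition, exactly like Sheats' free last component.
\end{itemize}
This identifies the elements of $\Tdk$ with entries below $p^N$, which include every minimizer, with the decompositions of this single $Y$. \Cref{cor:H3-sheats-reindexed} then gives the unique minimizer at once. Your finiteness observation is what legitimizes this choice of $N$; the paper's phrase ``all tuples'' should be read in that sense.
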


\begin{proof}
We saw already that $\Tdk$ is nonempty,
and that $s_d(k) \ge dk + M_d(k-1)$ by \Cref{lem:multinomial_expansion}.
To show equality holds, it suffices to rule out cancellation among terms of minimal weight.
We will use \Cref{cor:H3-sheats-reindexed} to show in fact
there is only one term of minimal weight, so cancellation can never occur.

Choose $N$ so large that the base-$p$ digits of $k-1$ and of all tuples vanish in positions $\geq N$.
All complements in the next paragraph are taken only in the digit positions $0,1,\ldots,N-1$.
Fix
\[ Y \coloneq p^N-1-(k-1).  \]
For each $d$-tuple $(m_1, \dots, m_d) \in \Tdk$, we define
\[ m_0 \coloneq Y-(m_1+\cdots+m_d) \ge 0.  \]
We contend this produces a bijection
\begin{align*}
  \Tdk &\longleftrightarrow \text{decompositions in \Cref{cor:H3-sheats-reindexed}} \\
  (m_1, \dots, m_d) &\longmapsto (m_0, m_1, \dots, m_d)
\end{align*}
Indeed, the carry-free condition
$(k-1)\oplus m_1\oplus\cdots\oplus m_d$
means that, in every digit position below $N$,
the digit sum of $m_1+\cdots+m_d$ is at most the corresponding digit of the complement $p^N-1-(k-1)$,
so that $Y=m_0\oplus m_1\oplus\cdots\oplus m_d$ is carry-free as well.

Because \Cref{cor:H3-sheats-reindexed} promises a unique minimizer,
this now follows for $\Tdk$ too.
\end{proof}

\section{Proof of \Cref{H2} over prime fields}\label{sec:H2}
We again assume in this section that $q=p$ is prime.
We will prove \Cref{H2} using \Cref{lem:H3-minformula}.

\subsection{The reciprocal slot formula for \texorpdfstring{$s_d(k)$}{s d(k)}}
For $k>0$, write
\[ k-1=\sum_{e\geq 0}a_ep^e, \qquad 0\leq a_e\leq p-1.  \]
Define the \emph{complementary digit-slot multiset}
\begin{equation}\label{eq:H2-Ak}
  \Slots_p(k-1) \coloneq \{p^e\text{ repeated }p-1-a_e\text{ times}\}_{e\geq 0}.
\end{equation}
This multiset is infinite, because $a_e=0$ for all sufficiently large $e$.
The interpretation is simple:
$\Slots_p(k-1)$ lists the base-$p$ digit slots that can be added to $k-1$ without producing a carry.

Recalling our block notation from \Cref{sec:block}, we choose
\begin{align*}
  n &\coloneq p-1 \\
  M &= \Slots_p(k-1)
\end{align*}
and abbreviate
\[
  B_r(k-1) \coloneq B_r(\Slots_p(k-1))\qquad {\text {\rm and}}\qquad \beta_r(k-1) \coloneq \beta_r(\Slots_p(k-1)).
\]
We now give the main result of this section,
a formula for $s_d(k)$ in terms of these blocks.
\begin{theorem}[Reciprocal slot formula]\label{thm:H2-slot-formula}
For $q=p$, $d\geq 1$, and $k>0$,
\begin{equation}\label{eq:H2-slot-formula}
  s_d(k)=dk+d\beta_1(k-1)+(d-1)\beta_2(k-1)+\cdots+\beta_d(k-1).
\end{equation}
\end{theorem}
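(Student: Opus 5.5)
By \Cref{lem:H3-minformula} (valid for all $q$, in particular $q=p$) we already know $s_d(k)=dk+M_d(k-1)$. So the whole proof reduces to the purely combinatorial identity
\[
  M_d(k-1)=\Phi_d(\Slots_p(k-1))=d\beta_1(k-1)+(d-1)\beta_2(k-1)+\cdots+\beta_d(k-1),
\]
with $n=p-1$. I would prove this by translating $\Tdk$ into the language of disjoint submultisets of slots, and then applying the block-minimization lemma \Cref{lem:H2-block-min}.

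\textbf{Step 1: dictionary.} Since $q=p$, the carry-free condition $(k-1)\oplus m_1\oplus\cdots\oplus m_d$ says the following. At each position $e$, the digits $m_{1,e},\dots,m_{d,e}$ sum to at most $p-1-a_e$, which is the multiplicity of $p^e$ in $\Slots_p(k-1)$. Hence a tuple of nonnegative integers satisfying the carry-free condition is the same thing as a choice of pairwise disjoint finite submultisets $T_1,\dots,T_d\subset\Slots_p(k-1)$ with $m_i=\sum_{c\in T_i}c$. Here $T_i$ consists of $p^e$ taken $m_{i,e}$ times, and we identify equal-valued slots as in \Cref{lem:H2-block-min}. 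Because $p^e\equiv 1\pmod{p-1}$, we get $m_i\equiv |T_i|\pmod{p-1}$. So the conditions $m_i>0$ and $(p-1)\mid m_i$ become: $|T_i|$ is a positive multiple of $n=p-1$. The objective is
\[
  m_1+2m_2+\cdots+dm_d=\sum_{i=1}^d i\sum_{c\in T_i}c.
\]

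\textbf{Step 2: lower bound.} Given such $T_i$ with $|T_i|\ge n$, choose any $n$-element submultiset $T_i'\subseteq T_i$. Since all slots are positive, the weighted sum over the $T_i'$ is at most the weighted sum over the $T_i$. The $T_i'$ are still disjoint of size exactly $n$, so \Cref{lem:H2-block-min} gives a weighted sum of at least $\Phi_d(\Slots_p(k-1))$. Hence $M_d(k-1)\ge\Phi_d$.

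\textbf{Step 3: attainment.} Take $T_i=B_{d+1-i}(k-1)$. These blocks are disjoint and each has $p-1$ elements, so by Step 1 the resulting $m_i=\beta_{d+1-i}(k-1)$ is positive, divisible by $p-1$, and jointly carry-free with $k-1$. Thus $(m_1,\dots,m_d)\in\Tdk$, and its weight is exactly $\Phi_d$. Combining Steps 2 and 3 with \Cref{lem:H3-minformula} yields \eqref{eq:H2-slot-formula}.

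The only delicate point is Step 1: making the correspondence between carry-free tuples and disjoint submultisets precise when a slot value $p^e$ occurs with multiplicity. For this I would phrase everything in terms of digit counts $m_{i,e}$ with the constraint $\sum_i m_{i,e}\le p-1-a_e$. I would also check that the case $p=2$ (where $n=1$ and the divisibility condition is vacuous) needs no separate treatment.
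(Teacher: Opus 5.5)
Your proposal is correct and follows essentially the same route as the paper. It reduces via \Cref{lem:H3-minformula} to computing $M_d(k-1)$, translates tuples in $\Tdk$ into pairwise disjoint submultisets $T_i\subseteq\Slots_p(k-1)$ with $|T_i|$ a positive multiple of $p-1$, shrinks each $T_i$ to exactly $p-1$ slots, and applies \Cref{lem:H2-block-min}; your explicit attainment step with $T_i=B_{d+1-i}(k-1)$ just spells out what the paper leaves implicit.
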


\begin{proof}
  In the notation of \Cref{lem:H3-minformula},
  it suffices to compute $M_d(k-1)$.
  We identify each $m_i$ with a sum of powers of $p$ given by its base-$p$ representation;
  let \[ T_i \subseteq \Slots_p(k-1) \] be the corresponding powers of $p$
  (so that $m_i = \sum_{c \in T_i} c$).
  Then the requirement $\mathbf{m} \in \Tdk$ translates as:
  \begin{itemize}
    \item The condition $n \mid m_i$ and $m_i > 0$
      means each $T_i$ is nonempty and $|T_i|$ is divisible by $n$
      (because $c \equiv 1 \pmod{p-1}$ for all $c \in T_i$).
    \item The carry-free condition says that the $T_i$ are pairwise
      disjoint submultisets of $\Slots_p(k-1)$.
  \end{itemize}
  If $|T_i|\geq 2n$ for any $i$, removing the largest $n$ slots from $T_i$
  leaves a nonempty set of cardinality divisible by $n$ and strictly lowers the exponent.
  Thus, we may as well assume in fact that $|T_i| = n$ for every $i$.

  The block-rearrangement inequality \Cref{lem:H2-block-min} now gives
  \[ M_d(k-1) = \min_{\substack{|T_i|=n \\ \bigsqcup T_i \subseteq \Slots_p(k-1)}}
    \left( \sum_{c \in T_1} c + 2 \sum_{c \in T_2} c + \dots
    \right) = d\beta_1(k-1)+\dots+\beta_d(k-1) \]
  which is what we wanted to prove.
\end{proof}

\begin{remark}
  We can also prove the uniqueness of the minimizer directly in this case
  without having to use Sheats' theorem, as follows.
  The equality statement in \Cref{lem:H2-block-min} forces the numerical tuple
  \[
    (m_1,\ldots,m_d)=(\beta_d(k-1),\beta_{d-1}(k-1),\ldots,\beta_1(k-1)).
  \]
  Interchanging equal-valued slots may change the named submultisets $T_i$,
  but it does not change this numerical tuple.
  Hence exactly one numerical tuple contributes at the minimal exponent.

  This means that \Cref{thm:H2-slot-formula} could be proved directly
  from \Cref{lem:multinomial_expansion}, without depending on Sheats' theorem.
\end{remark}

\begin{example}
  We show \Cref{thm:H2-slot-formula} for $p=q=2$, $k=5$ and $d=1$.
  Then $k-1 = 4 = 100_2$, and $\Slots_2(4)=\{1,2,8,16,32,\ldots\}$.
  Hence $\beta_1(4) = 1$, $\beta_2(4) = 2$, $\beta_3(4) = 8$, etc., and
  \[ s_1(5) = 5 \cdot 1 + \beta_1(4) = 5 \cdot 1 + 1 = 6.  \]
  As further examples, we show $p=q=2$, $k \in \{6,7,8\}$, and $d = 2$ in \Cref{tab:H2_example}.
\end{example}

\begin{table}[ht]
  \[
    \begin{array}{c ccc}
      k & k-1 & \Slots_2(k-1) & s_2(k) \\ \hline
      k = 6 & 5 = 101_2 & \Slots_2(5) = \{2,8,16,32,\dots\}
        & s_2(6) = 2 \cdot 6 + 2 \beta_1(5) + \beta_2(5) = 24  \\
      k = 7 & 6 = 110_2 & \Slots_2(6) = \{1,8,16,32,\dots\}
        & s_2(7) = 2 \cdot 7 + 2 \beta_1(6) + \beta_2(6) = 24  \\
      k = 8 & 7 = 111_2 & \Slots_2(7) = \{8,16,32,\dots\}
        & s_2(8) = 2 \cdot 8 + 2 \beta_1(7) + \beta_2(7) = 48
    \end{array}
  \]
  \caption{Examples of the slot formula for $p=q=2$, $d=2$, and $6 \le k \le 8$.}
  \label{tab:H2_example}
\end{table}

The case $d=1$ is worth isolating because it identifies the first block with the shift from $k$ to $s_1(k)$,
which is the shift appearing in H2.

\begin{corollary}[The first reciprocal block]\label{cor:H2-s1}
Assume $q=p$ and let $k>0$.  In degree one,
the leading exponent is obtained by using the first admissible reciprocal block.  Equivalently,
\[
  s_1(k)=k+\beta_1(k-1).
\]
\end{corollary}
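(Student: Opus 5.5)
This is the case $d=1$ of \Cref{thm:H2-slot-formula}, so the plan is to specialize that formula and then interpret the result. Setting $d=1$ in \eqref{eq:H2-slot-formula}, the weighted sum $d\beta_1(k-1)+(d-1)\beta_2(k-1)+\cdots+\beta_d(k-1)$ collapses to its single term $1\cdot\beta_1(k-1)$. This gives
\[
  s_1(k)=k+\beta_1(k-1)
\]
at once. It holds for every $k>0$ because \Cref{thm:H2-slot-formula} requires only $q=p$, $d\geq 1$ and $k>0$.

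For the verbal claim, that the leading exponent comes from the first admissible reciprocal block, I will unwind \Cref{lem:H3-minformula} in degree one. There $\mathcal{T}_{1,k-1}$ consists of the single integers $m_1>0$ with $(p-1)\mid m_1$ and $(k-1)\oplus m_1$ carry-free. In the proof of \Cref{thm:H2-slot-formula} such an $m_1$ corresponds to a submultiset $T_1\subseteq\Slots_p(k-1)$ whose cardinality is a positive multiple of $n=p-1$. Trimming the largest slots reduces to the case $|T_1|=n$. \Cref{lem:H2-block-min} with $d=1$ then says the minimum of $\sum_{c\in T_1}c$ is attained exactly at $T_1=B_1(k-1)$, up to equal-valued slots. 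So the unique minimizing $m_1$ is $\beta_1(k-1)$, and the leading term of $S_1(k)$ is $t^{-k-\beta_1(k-1)}$ with nonzero coefficient.

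I expect no real obstacle, since everything is already contained in \Cref{thm:H2-slot-formula}. The one point to state carefully is that the trimming step stays inside $\mathcal{T}_{1,k-1}$. Removing $n$ slots from $T_1$ keeps it nonempty and keeps its size divisible by $n$. It also preserves carry-freeness with $k-1$, because a smaller submultiset of $\Slots_p(k-1)$ still fits in the available digit slots.
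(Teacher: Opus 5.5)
Your proposal is correct and matches the paper, which obtains this corollary simply as the $d=1$ case of \Cref{thm:H2-slot-formula}. Your additional unwinding of the verbal claim parallels the paper's remark following that theorem. That unwinding trims to $|T_1|=n$ and then uses the equality case of \Cref{lem:H2-block-min}, which pins down the minimizing $m_1=\beta_1(k-1)$ uniquely.
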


\subsection{The H2 minimization}
We now use the slot formula we just proved (\Cref{thm:H2-slot-formula}) to deduce \Cref{H2}.
The admissible integer $j$ in H2 corresponds to extracting a submultiset from the complementary slots of $k$;
the extraction inequality says that the first block is the unique optimal extraction.

\begin{proof}[Proof of \Cref{thm:intro-H2}]
If $d=1$, then $s_0(\ell)=0$ for every $\ell>0$, so the quantity in H2 is $s_1(k)+j$.
Obviously this is minimized uniquely at $j=0$.

Assume now that $d\geq 2$. Abbreviate
\[
  M \coloneq \Slots_p(k-1) \qquad{\text{and}}\qquad \beta_r \coloneq \beta_r(M).
\]
By \Cref{cor:H2-s1}, we have
\[
  s_1(k)=k+\beta_1.
\]
For an admissible $j$, we have the requirement
\[ 0 \not\equiv \binom{s_1(k)+j-1}{k-1} = \binom{(k-1)+u}{k-1} \pmod p
  \quad\text{where} \quad u \coloneq \beta_1+j.  \]
By \Cref{lem:Lucas-binomial},
we may identify $u$ via its base-$p$ digits
with a unique finite submultiset $U \subseteq M$.
Since $n \mid \beta_1$ and $n \mid j$, we still have $n \mid u$ and thus as before $|U|$
is a positive multiple of $n$.

The slots used by $u$ are precisely removed from the complement
when one passes from $k$ to $s_1(k)+j=k+u$, meaning
\[
  \Slots_p((k-1)+u) = M \setminus U.
\]
Using \Cref{thm:H2-slot-formula} with $d-1$ in place of $d$, we obtain
\begin{align*}
  s_{d-1}(s_1(k)+j)+s_1(k)+j
  &= s_{d-1}(k+u) + (k+u) \\
  &= \left( (d-1)(k+u) + \Phi_{d-1}(M \setminus U) \right) + (k+u) \\
  &= d(k+u)+\Phi_{d-1}(M\setminus U)\\
  &= dk+\left(d\sum_{c\in U}c+\Phi_{d-1}(M\setminus U)\right)
  \ge dk + \Phi_d(M)
\end{align*}
with the last inequality by the extraction inequality \Cref{lem:H2-extraction}.

We need to verify the only equality case is $j = 0$.
For $j=0$, we have $u=\beta_1$, and $U$ is exactly the first block $B_1(M)$; hence equality holds.
If $j>0$, then $u>\beta_1$, so $U$ cannot be the first block $B_1(M)$ as a multiset of values.
The equality condition in \Cref{lem:H2-extraction} is impossible, and the inequality is strict.
Thus the minimum is attained uniquely at $j=0$.
\end{proof}

Finally we translate the H2 minimization statement into the recursion.
The point is that the unique minimum in the reindexed expansion is represented by the $j=0$ term.

\begin{proof}[Proof of \Cref{cor:intro-H2}]
Insert \Cref{thm:intro-H2} into expansion \eqref{eq:intro-H2-expansion},
reindexed as in \cite[Section 3.2]{Thakur2009}.
The unique minimal term is the term $j=0$, so its leading coefficient cannot cancel.  This gives
\[
  s_d(k)=s_{d-1}(s_1(k))+s_1(k). \qedhere
\]
\end{proof}

\begin{example}\label{ex:H2-q2}
Let $q=2$, $k=5$, and $d=3$, so $s_1(5) = 6$.
Thus \Cref{H2} asserts that among the admissible $j\geq 0$, the expression
\[ s_2(6+j)+6+j \]
is uniquely minimized at $j=0$.
For comparison, the values $0 \le j \le 2$
are admissible since $\binom{j+5}{k-1} = \binom{j+5}{4}$
are all odd for those $j$; and from \Cref{tab:H2_example} we get
\[ s_2(6)+6 = 30, \qquad s_2(7)+7 = 31, \qquad s_2(8)+8 = 56. \]
\end{example}

\subsection{Deducing the consequences of H2}
Our introduction advertised several nice corollaries of \Cref{H2};
we make good on that advertisement here.

We first derive the Newton-polygon convexity consequence from the recursion alone.

\begin{proof}[Proof of \Cref{cor:intro-convexity}]
The recursion reduces the monotonicity statement for $d$ to the same statement
with $d$ replaced by $d-1$ and $k$ replaced by $s_1(k)$.  The base case is $s_1(k)>s_0(k)=0$.
The same reduction applies to the jump inequality.  It remains only to check the base case
\[
  2s_1(k)<s_0(k)+s_2(k)=s_2(k).
\]
By \eqref{eq:intro-recursion}, this is equivalent to $s_1(k)<s_1(s_1(k))$,
which follows from the fact that $s_1(m)>m$ for every $m>0$.
\end{proof}

The following consequence records the Riemann Hypothesis implication supplied by the strict jump
inequalities through Thakur's Newton-polygon criterion.

\begin{proof}[Proof of \Cref{cor:intro-RH}]
By \Cref{cor:intro-convexity}, the sequence $s_d(k)$ has strictly increasing successive differences.
This is precisely the Newton-polygon hypothesis used in Thakur's proof of the Riemann-hypothesis analogue;
see \cite[Theorem~3]{Thakur2009} and the surrounding discussion in \cite[Section~5.8]{Thakur2004}.
The claimed zero-distribution statement therefore follows from that criterion.
This recovers the prime-field case,
already known from the work of Wan and Diaz--Vargas and included in Sheats' general theorem.
\end{proof}

For multizeta values, the same monotonicity singles out the lowest admissible
degree pattern as the unique leading contribution.

\begin{proof}[Proof of \Cref{cor:intro-MZV}]
By \Cref{cor:intro-convexity},
the quantities $s_d(k)$ are strictly increasing in $d$ for every positive $k$.
Therefore, in the defining sum for $\zeta(s_1,\ldots,s_r)$, the term of largest $t$-degree is unique:
it is the term with the smallest allowable degree indices.
Since this leading term has no competitor of the same degree, it cannot cancel.
Hence the multizeta value is nonzero, as in \cite[Theorem 4]{Thakur2009}.
\end{proof}

\section{Proof of \Cref{H3} over arbitrary finite fields}
\label{sec:H3}
In this section $q = p^f$ for some $f \ge 1$.
We can directly prove \Cref{H3} from \Cref{lem:H3-minformula}.

\begin{proof}[Proof of \Cref{thm:intro-H3}]
By \Cref{lem:H3-minformula},
\[
  s_d(k)=dk+M_d(k-1),
  \qquad
  s_d(k+1)=d(k+1)+M_d(k).
\]
Therefore
\begin{equation}\label{eq:H3-difference}
  s_d(k+1)-s_d(k)=d+M_d(k)-M_d(k-1).
\end{equation}
If $p\nmid k$, then the units digit of $k$ in base-$p$ is nonzero.
Passing from $k$ to $k-1$ lowers only that units digit by $1$ and leaves all higher digits unchanged.
Hence
\[
  k\oplus m_1\oplus\cdots\oplus m_d \text{ carry-free}
  \implies (k-1)\oplus m_1\oplus\cdots\oplus m_d \text{ carry-free}.
\]
This implies $\mathcal T_{d,k} \subseteq \Tdk$ and we have
\[ M_d(k)\geq M_d(k-1).  \]
Hence \eqref{eq:H3-difference} gives $s_d(k+1)-s_d(k)\geq d>0$ and \Cref{H3} is proved.
\end{proof}

\section{Appendix: Lean formalizations generated by AxiomProver}
\label{sec:axiomprover}
Here we provide the context for this project as well as the protocol used for Lean
formalization and verification (see \cite{Mathlib2020, Lean}).
The formal proofs provided in this work were developed and verified using Lean~4.28.0.
Compatibility with earlier or later versions is not guaranteed due to the
evolving nature of the Lean 4 compiler and its core libraries.

\subsection{Description of artifacts}
The relevant files are all posted in the following repository:
\begin{center}
  \url{https://github.com/AxiomMath/zeta-h123}
\end{center}

\begin{figure}[ht]
  \centering
  \begin{tikzpicture}[x=1cm, y=1cm]
    \node (lucas) [proved] at (0,6) {Lucas (\S\ref{sec:lucas}) \\ Proved within \texttt{Lem41}};
    \node (pow) [proved] at (5,6) {Pow sum (\S\ref{sec:power_sum}) \\ Proved within \texttt{Lem41}};
    \node (block) [proved] at (10,6) {Block ineq (\S\ref{sec:block}) \\ Proved within \texttt{H2}};
    \node (lem41) [proved, text width = 5cm] at (2.5,4) {Expansion (\Cref{lem:multinomial_expansion}) \\ \texttt{Lem41}};
    \node (minform) [assumed, text width = 5cm] at (2.5,2) {Min formula (\Cref{lem:H3-minformula})};
    \node (H1) at (0,0) [proved] {\Cref{H1} (\S\ref{sec:H1}) \\ \texttt{H1}};
    \node (H2) at (5,0) [proved] {\Cref{H2} (\S\ref{sec:H2}) \\ \texttt{H2}};
    \node (H3) at (10,0) [proved] {\Cref{H3} (\S\ref{sec:H3}) \\ \texttt{H3}};
    \draw [arrow] (lucas) -- (lem41);
    \draw [arrow] (pow) -- (lem41);
    \draw [arrow] (lem41) -- node[anchor=west] {using Sheats' theorem} (minform);
    \draw [arrow] (minform) -- (H2);
    \draw [arrow] (minform) -- (H3);
    \draw [arrow] (block) -- (H2);
  \end{tikzpicture}
  \caption{A copy of \Cref{fig:skeleton}
    additionally annotated with the locations of the formal proofs in our repository.
    We do not formalize \Cref{lem:H3-minformula}
    because of the external dependence on Sheats' theorem (\Cref{thm:H3-Sheats}),
    but prove all the remaining results assuming it.}
  \label{fig:skeleton_with_formalization}
\end{figure}

The formalizations provided in this repository consist of:
\begin{itemize}
  \item \texttt{H1/}: A proof of \Cref{H1}.
    This can thus be thought of as a translation of \Cref{sec:H1} into Lean.

  \item \texttt{H2/}: A proof of \Cref{H2} where $s_d(k)$ is (re-)defined
    according to \Cref{lem:H3-minformula},
    thus rephrasing the statement to be combinatorial only (not requiring $S_d(k)$).
    This can thus be thought of as a translation of \Cref{sec:H2}
    (as well as \Cref{sec:block}) into Lean.

  \item \texttt{H3/}: A proof of \Cref{H3} with the same use of \Cref{lem:H3-minformula}.
    This can thus be thought of as a translation of the
    (quite short) \Cref{sec:H3} into Lean.

  \item \texttt{Lem41/}: A formalization of \Cref{lem:multinomial_expansion},
    which provided the link between the original $S_d(k)$ to $\Tdk$.
    This required formalizing \Cref{sec:lucas} and \Cref{sec:power_sum} as well.
\end{itemize}
See \Cref{fig:skeleton_with_formalization} for a diagram of the dependencies
and where these dependencies end up being formalized in the repository.
Note that, because \Cref{lem:H3-minformula} has an
external dependence on Sheats' theorem (\Cref{thm:H3-Sheats}) in \Cref{sec:H2H3},
we did not include this as a task in formalization.

\subsection{Input files}
Next we describe the inputs to AxiomProver used to generate the above formalizations.
For each of the four formalizations, we had the following input files:
\begin{itemize}
  \item \texttt{task.md}: a self-contained statement of the result to be proved
  \item \texttt{informal.tex}: a self-contained natural-language proof
    edited from an earlier draft of this paper.
\end{itemize}
These input files are hosted in the GitHub repository above.
We emphasize that the task for AxiomProver was merely to formalize the proofs in this paper,
rather than to invent the arguments itself.
That is, \texttt{informal.tex} contained the proof to be formalized.
Therefore, we do not consider these proofs to be end-to-end unassisted automated solving.

Based on these input files, AxiomProver generated two output files:
\begin{itemize}
  \item \texttt{problem.lean}, a Lean 4.28.0 formalization of the problem statement; and
  \item \texttt{solution.lean}, a complete Lean 4.28.0 formalization of the proof.
\end{itemize}

This paper itself is written by the authors for human readers;
the use of AI was restricted only to proofreading.
(A diff of the proofreading edits is available upon request.)
At first glance, the proofs found by AxiomProver may not resemble the narrative presented in this paper.
Turning a Lean file into a human-readable proof is difficult
because Lean is written as code for a type-checker.

\section*{Acknowledgements}\noindent The authors thank Dinesh Thakur for suggesting that AxiomProver
be tested against Hypotheses H1, H2, and H3.
We also thank Simon Mahns and Karun Ram for their assistance in the preparation of this paper.

\end{document}